\numberwithin{equation}{section}
\newcommand{\dif}{\mathop{}\mathrm{d}}
\newcommand{\n}{\mathop{}\mathrm{n}}
\newcommand{\bbR}{\mathbb{R}}
\newcommand{\bl}{\boldsymbol}
\newcommand{\T}{\mathcal{T}}
\newcommand{\E}{\mathcal{E}}
\newcommand{\wt}{\widetilde}
\newcommand{\lal}{\langle}
\newcommand{\ral}{\rangle}
\newcommand{\pt}{\partial}
\newtheorem{thm}{Theorem}[section]
\newtheorem{lem}{Lemma}[section]
\newtheorem{cro}{Corollary}[section]
\newtheorem{re}{Remark}[section]
\newcommand{\wuhao}{\fontsize{10.5pt}{12.6pt}\selectfont}        
\title{High-order primal mixed finite element method for boundary-value correction on curved domain}
\date{}
\author{Yongli Hou,
Yi Liu\footnote{Corresponding author}~~and Tengjin Zhao
}
\begin{document}
\maketitle
\setstretch{1.0}                            
\setlength{\parindent}{2em}                 
\setlength{\parskip}{3pt plus1pt minus1pt}  

\setlength{\floatsep}{10pt plus 3pt minus 2pt}      
\setlength{\abovecaptionskip}{2pt plus 1pt minus 1pt} 
\setlength{\belowcaptionskip}{3pt plus1pt minus2pt} 

\renewcommand\figurename{Fig.}
\renewcommand\tablename{Tab.}

\wuhao
\begin{abstract}
  This paper addresses the non-homogeneous Neumann boundary condition on domains 
  with curved boundaries.
  We consider the Raviart-Thomas element $(RT_k)$ of degree $k\geq 1$ on triangular mesh.
  on a triangular mesh. A key feature of our boundary value correction method is 
  the shift from the true boundary to a surrogate boundary. 
  We present a high-order version of the method, achieving an $O(h^{k+1/2})$ convergence 
  in $L^2$-norm estimate for the velocity field 
  and an $O(h^k)$ convergence in $H^1$-norm estimate for the pressure.
  Finally, numerical experiments validate our theoretical results.

\emph{Key words:} primal mixed finite element method, boundary value correction method, non-homogeneous Neumann boundary condition, curved domain
\end{abstract}

\vspace{0.9em}
\section{\textbf Introduction}
Many practical problems arising in science and engineering often involve domains with curved boundaries.
For the domain $\Omega$ with curved boundaries, 
the geometric error between {the} curved boundary $\Gamma$
and the approximating boundary $\Gamma_h$ leads to
a loss of accuracy for high-order elements 
\cite{Strang_Berger_The_change_in_solution_due_to_change_in_domain1973,Thomee_Polygonal_domain_approximation_in_Dirichlet's_problem1973}.
There are two main strategies to address this issue.
Both the isoparametric finite element method 
\cite{isoparametric_1968,Optimal_isoparametric_finite_elements_1986} and 
the isogeometric analysis 
\cite{Isogeometric_analysis_Hughes_2005,Isogeometric_Analysis_Cottrell_2009} aim to
reduce the geometric error without modifying the variational form. 
Another strategy is the boundary value correction method \cite{1972}, 
which directly solves on a polygonal approximation domain $\Omega_h$,
and focuses on a modified variational formulation. 
Recently, there has been a growing body of research on boundary value correction, 
{including studies on} the discontinuous Galerkin method \cite{Cockburn_Boundary_conforming_DG_2009,Cockburn2012}, 
the shifted boundary method \cite{Part1,high_order_SBM},
the cut finite element method (cutFEM) with boundary value correction \cite{Burman_boundary_value_correction_2018}
and the weak Galerkin method with boundary value corrected 
\cite{Yiliu_WG}({ I will be put that bvc paper on axriv}), among others.

The treatment of curved boundaries for the mixed Poisson problems with Neumann boundary conditions
is {seldom discussed} in the literature.
In \cite{Bertrand_2014_lowest,Bertrand_2014_High_order,Bertrand_2016_High_order},
the boundary condition on $\Gamma_h$ is imposed {in} the discrete space.
In \cite{Bertrand_2014_lowest}, authors study the least squares method for $RT_0$
and verify a loss of accuracy for  $RT_1$ through numerical experiments, but no theoretical analysis is provided.
{Later}, they {extended the analysis} to high-order Raviart-Thomas element 
in \cite{Bertrand_2014_High_order}
and parametric Raviart-Thomas elements {in} \cite{Bertrand_2016_High_order}. 
Recently,
the cutFEM {for curved boundaries applied to} $RT_k,k\geq 1$ {has been studied}
in \cite{Puppi_cutFEM}.
In their work, the boundary conditions are weakly enforced {within} the weak formulation.
This work {achieves} an $O(h^k)$ convergence {rate in the} $L^2$-norm estimate 
for the velocity {field}, 
which is {considered} a suboptimal convergence order.

The purpose of this paper is to provide a convergence analysis of the
primal mixed element approximation \cite{book:FE_RT} 
using the pairing $L^2(\Omega)$ and $H^1(\Omega)$ 
for mixed Poisson problems with curved boundary.
In the context of the mixed finite element method, Neumann boundary conditions are essential.
Following the approach in \cite{fractual_non_matching_2012,Puppi_mixed}, 
this paper weakly imposes the Neumann boundary conditions in the weak formulation.
Moreover, this paper adopts the boundary value correction method, 
which avoids cut elements and reduces complexity of implementation.
The method employs Taylor expansion to address the discrepancy in normal flux between
 $\Gamma$ and $\Gamma_h$.
 Furthermore, this work includes a 
 rigorous analysis of the loss of approximation accuracy for high-order elements.
 Finally, in comparison to the cutFEM on curved boundaries discussed in \cite{Puppi_cutFEM}, 
 this paper achieves a better convergence order of $O(h^{k+1/2})$ for the velocity field. 

   This paper is organized as follows. In section \ref{sec2:notation}, 
   we introduce some notations and preliminaries;
   In section \ref{sec3:model_problem}, 
   we describe the model problem and introduce the boundary value correction method;
   In section \ref{sec4:discrete}, 
   {we establish the discrete space and variational {form} and analyze {its} well-posedness};
   In section \ref{sec5:error}, 
   the energy error estimate and the $L^2$ error estimate are proved;
   In section \ref{sec6:without_correction}, 
   we analysis the problem without {the boundary value correction};
   In section \ref{sec7:numerical}, 
   we {present} several numerical experiments to verify the theoretical results;
   { we conclude in \ref{sec8:conclusion} with our findings.}

\section{ Notations and preliminaries}\label{sec2:notation}
Throughout {this} paper, let $\Omega$ be a connected open set in $\bbR^2$ with Lipschitz continuous boundary $\Gamma$.
We assume that $\Omega$ is approximated by a polygonal domain $\Omega_h$ and denote by $\Gamma_h$ the boundary of $\Omega_h$.
Let $\T_h$ denote a family of triangular meshes {for} $\Omega_h$.
We require that all the vertices of $\T_h$ lying on $\Gamma_h$ also lie on $\Gamma$,
{ensuring} $\T_h$ is a body-fitted triangular partition of $\Omega$.
For each $K\in{\T}_h$, let $h_K=\text{diam(K)}$ and $h=\max_{K\in{\T}_h}h_K$.
We assume the mesh is shape-regular; that is, there exists a constant $\sigma>0$, 
independent {of} $h$,
such that $ \max_{K\in{\T}_h}\frac{h_K}{\rho_K}\leq \sigma$, 
{where $\rho_K$ is the diameter of the largest ball inscribed in $K$.}
{Furthermore, we assume that the mesh is quasi-uniform; that is there exists $\tau>0$, 
independent of $h$,
such that $\min_{K\in{\T}_h} h_K\geq \tau h$.}
{Let $\E_h$ denote the set of all edges in $\T_h$.
Define $\E_h^o$ as the set of all interior edges and $\E_h^b=\E_h\setminus{\E_h^o}$.}
Denote by $\T_h^b$ all mesh elements containing at least one edge in $\E_h^b$ 
and $\T_h^o=\T_h\setminus\T_h^b$.
Let $e$ be an interior edge shared by two elements $K_1$ and $K_2$, 
we define the jump $[\cdot]$ on $e$ for scalar functions $q$ as follows:
$$[q]=q|_{K_1}-q|_{K_2}.$$

We {adopt} standard definitions for the Sobolev spaces {as presented in} \cite{FEM_Brenner}.
Let $H^m(S)$, for $m\in\bbR$ and $S\subset\bbR^2$ be the usual Sobolev space with
associated norm $\|\cdot\|_{m,S}$ and seminorm $|\cdot|_{m,S}$.
{When $m=0$, the space $H^0(S)$ coincides with the square integrable space $L^2(S)$.}
{We define 
$$H^m(\T_h):=\prod_{K\in\T_h}H^m(K)$$ 
with seminorm 
$$|\cdot|_{m,\T_h}:=(\sum_{K\in\T_h}|\cdot|_{m,K}^2)^{1/2}.$$
We denote $\bl H^m(S)$ and $\bl H^m(\T_h)$ representing the corresponding vector space.}

We denote $L_0^2(S)$ as the mean value free subspace of $L^2(S)$.
For $m\geq 0$, the above notation extends to a portion 
$s\subset\Gamma$ or $s\subset\Gamma_h$.
For example, 
let $\|\cdot\|_{m,s}$ be the {Sobolev} norm on $s$.
Denote by $(\cdot,\cdot)_S$ the $L^2$ inner-product on $S\subset\bbR^2$,
and by $\lal\cdot,\cdot\ral_s$ the duality pair on {$s\subset\Gamma$ or $s\subset\Gamma_h$}.
Finally, all the above-defined notations can be easily extended to vector spaces using the standard product.
{Moreover,
\begin{equation*}
\begin{aligned}
H(\mathrm{div},S)&=\{\bl v\in \bl L^2(S):\mathrm{div}\,\bl v \in L^2(S)\},
\end{aligned}
\end{equation*}
and its subspace $H_g(\mathrm{div},S)$  for the given $g\in H^{-\frac12}(\partial S)$ is defined as
\begin{equation*}
\begin{aligned}
H_g(\mathrm{div},S)&=\{\bl v\in H(\mathrm{div},S): \bl v\cdot\bl\n=g \quad \text{on}~\partial S \}.
\end{aligned}
\end{equation*}}

The norm in $H(\mathrm{div},S)$ is defined by
$$\| \bl v\|_{H(\mathrm{div},S)}=(\| \bl v\|_{0,S}^2+\|\mathrm{div}\, \bl v\|_{0,S}^2)^{1/2}.$$

We will also use the notation
$$M_k(\bl v)=\max_{|\alpha|\leq k}\sup_{x\in\Omega_h}|D^\alpha\bl v(x)|.$$

In the remainder of the paper, we use the notation $\lesssim$ to denote {less than} or equal
 to a constant
and the analogous notation $\gtrsim$ to denote {greater than} or equal to a constant.

Hereafter, we collect some well-known inequalities that are used in {this} paper.
\begin{lem}\label{lem:TRACE}
(Trace Inequality \cite{FEM_Brenner}). 
For any $K\in\T_h$ and $v\in H^1(K)$, we have
\begin{equation*}
\begin{aligned}
\|v\|_{0,\pt K}\lesssim h_K^{-1/2}\| v\|_{0,K}+h_K^{1/2}|v|_{1,K}.
\end{aligned}
\end{equation*}
\end{lem}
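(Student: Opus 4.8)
The plan is to reduce the estimate to a fixed reference triangle by an affine change of variables, so that all the $h_K$-dependence is carried by the scaling factors of the map rather than by the analysis. Fix the reference triangle $\hat K$ with vertices $(0,0),(1,0),(0,1)$, and let $F_K(\hat x)=B_K\hat x+b_K$ be the affine map with $F_K(\hat K)=K$. For $v\in H^1(K)$ put $\hat v=v\circ F_K\in H^1(\hat K)$. The first ingredient I would use is the unscaled trace theorem on the fixed domain $\hat K$, namely $\|\hat v\|_{0,\partial\hat K}\lesssim\|\hat v\|_{0,\hat K}+|\hat v|_{1,\hat K}$, valid for every $\hat v\in H^1(\hat K)$ because $\hat K$ is a bounded Lipschitz domain; the constant here depends only on $\hat K$ and not on $K$.

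The second ingredient is to track how each norm transforms under $F_K$. Shape-regularity supplies $\|B_K\|\lesssim h_K$, $\|B_K^{-1}\|\lesssim h_K^{-1}$ and $|\det B_K|\approx h_K^2$, and from these I would record three scaling relations: the volume relation $\|\hat v\|_{0,\hat K}^2=|\det B_K|^{-1}\|v\|_{0,K}^2\approx h_K^{-2}\|v\|_{0,K}^2$; the gradient relation $|\hat v|_{1,\hat K}^2\lesssim\|B_K\|^2|\det B_K|^{-1}|v|_{1,K}^2\lesssim|v|_{1,K}^2$, which follows from the chain rule $\hat\nabla\hat v=B_K^\top(\nabla v)\circ F_K$; and, crucially, the boundary relation $\|v\|_{0,\partial K}^2\lesssim h_K\,\|\hat v\|_{0,\partial\hat K}^2$, obtained edge by edge from the fact that the arclength element along an edge is stretched by a factor bounded by $\|B_K\|\lesssim h_K$.

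Combining the two ingredients then finishes the argument: inserting the reference trace theorem into the boundary relation gives $\|v\|_{0,\partial K}^2\lesssim h_K\big(\|\hat v\|_{0,\hat K}^2+|\hat v|_{1,\hat K}^2\big)\lesssim h_K\big(h_K^{-2}\|v\|_{0,K}^2+|v|_{1,K}^2\big)=h_K^{-1}\|v\|_{0,K}^2+h_K|v|_{1,K}^2$, and taking square roots with $\sqrt{a+b}\le\sqrt a+\sqrt b$ yields the claimed bound. I expect the boundary relation $\|v\|_{0,\partial K}^2\lesssim h_K\|\hat v\|_{0,\partial\hat K}^2$ to be the only delicate point, because it is precisely the step that fixes the exponent $h_K^{-1/2}$ on the volume term against $h_K^{+1/2}$ on the seminorm term; it must be justified through the one-dimensional Jacobian of $F_K$ restricted to each edge, not the two-dimensional factor $|\det B_K|$, and getting that measure scaling right is what the whole estimate hinges on.

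As an alternative that avoids the reference element and produces explicit constants, I could argue directly on $K$. Choosing $x_0$ to be the incenter, one has $(x-x_0)\cdot\bl n=\rho_K$ on every edge, so the divergence theorem applied to the vector field $(x-x_0)v^2$ gives $\rho_K\|v\|_{0,\partial K}^2=\int_K\big(2v^2+2v\,(x-x_0)\cdot\nabla v\big)\,dx$; bounding $|x-x_0|\le h_K$, invoking shape-regularity in the form $\rho_K\gtrsim h_K$, and applying a Young inequality to the cross term then reproduces the same estimate.
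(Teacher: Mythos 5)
The paper does not prove this lemma; it is stated as a known result with a citation to Brenner--Scott, and your reference-element scaling argument is exactly the standard proof given there. Your proposal is correct: the unscaled trace theorem on $\hat K$, the volume/gradient scalings, and in particular the edge-wise arclength scaling $\|v\|_{0,\partial K}^2\lesssim h_K\|\hat v\|_{0,\partial\hat K}^2$ (the one-dimensional Jacobian, as you rightly flag) combine to give the stated powers of $h_K$. Your alternative via the divergence theorem applied to $(x-x_0)v^2$ is also sound and arguably cleaner, since it yields explicit constants and uses shape-regularity only through $\rho_K\gtrsim h_K$; the only cosmetic slip there is that $(x-x_0)\cdot\bl\n$ equals the inradius $\rho_K/2$ rather than $\rho_K$ on each edge, which does not affect the conclusion.
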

\begin{lem}\label{lem:Poincare_inequality}
(Poncar\'{e}-Friedrichs inequality \cite{Poincare_Friedrichs_2003}). For any $v\in H^1(\T_h)$, one gets
\begin{equation*}
\begin{aligned}
\|v\|_{0,\Omega_h}^2\lesssim \sum_{K\in \T_h}|v|_{1,K}^2+\sum_{e\in\E_h^o}h^{-2}\left(\int_e [v]\dif s\right)^2+\left(\int_{\Omega_h} v\dif x\right)^2.
\end{aligned}
\end{equation*}
\end{lem}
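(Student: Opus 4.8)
The plan is to reduce to the mean-zero case, then replace $v$ by its piecewise element means, so that the whole statement collapses onto a discrete Poincar\'{e}--Wirtinger inequality for piecewise-constant functions. First I would set $m=|\Omega_h|^{-1}\int_{\Omega_h}v\dif x$ and work with $v-m$. Since a global constant has vanishing broken gradient and vanishing jump $[v]$ across every interior edge, the first two terms on the right-hand side are unchanged when $v$ is replaced by $v-m$, while $\|m\|_{0,\Omega_h}^2=|\Omega_h|^{-1}(\int_{\Omega_h}v\dif x)^2$ is exactly the last term. Hence it suffices to treat $v$ with $\int_{\Omega_h}v\dif x=0$ and drop the mean term.

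Next I would localize to element means. On each $K$ set $\bar v_K=|K|^{-1}\int_K v\dif x$; an affine change of variables to the reference triangle together with the Poincar\'{e} inequality there gives $\|v-\bar v_K\|_{0,K}\lesssim h_K|v|_{1,K}$. Let $w$ be the piecewise-constant function with $w|_K=\bar v_K$. Using $h_K\le h\lesssim 1$,
\[
\|v\|_{0,\Omega_h}^2\lesssim\|v-w\|_{0,\Omega_h}^2+\|w\|_{0,\Omega_h}^2\lesssim\sum_{K\in\T_h}|v|_{1,K}^2+\|w\|_{0,\Omega_h}^2,
\]
so the task becomes bounding $\|w\|_{0,\Omega_h}$. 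For an interior edge $e=\pt K_1\cap\pt K_2$, writing $\bar v_e^{K_i}=|e|^{-1}\int_e v|_{K_i}\dif s$, I would decompose the jump of $w$ as
\[
\bar v_{K_1}-\bar v_{K_2}=(\bar v_{K_1}-\bar v_e^{K_1})+\tfrac{1}{|e|}\int_e[v]\dif s+(\bar v_e^{K_2}-\bar v_{K_2}).
\]
Each outer term is controlled by the trace inequality (Lemma \ref{lem:TRACE}) combined with $\|v-\bar v_K\|_{0,K}\lesssim h_K|v|_{1,K}$, which yields $|\bar v_K-\bar v_e^K|\lesssim|v|_{1,K}$; together with $|e|\sim h$ this gives $(\bar v_{K_1}-\bar v_{K_2})^2\lesssim|v|_{1,K_1}^2+|v|_{1,K_2}^2+h^{-2}(\int_e[v]\dif s)^2$.

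Finally I would invoke the discrete Poincar\'{e}--Wirtinger inequality for piecewise constants on a shape-regular, quasi-uniform mesh, under which $\|w\|_{0,\Omega_h}^2\lesssim\sum_{e\in\E_h^o}(\bar v_{K_1}-\bar v_{K_2})^2$ for the mean-zero $w$ (note $\int_{\Omega_h}w=\int_{\Omega_h}v=0$). Inserting the edge bound and using shape-regularity (a bounded number of edges per element) to turn $\sum_e(|v|_{1,K_1}^2+|v|_{1,K_2}^2)$ into $\sum_{K}|v|_{1,K}^2$ delivers the claim. The main obstacle is precisely this last ingredient: establishing the discrete Poincar\'{e} inequality for piecewise constants with a constant independent of $h$. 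It rests on connectedness of $\Omega_h$ and on quasi-uniformity (which makes the weights $|e|/d_e\sim 1$), and is typically obtained either from a path-summation argument across adjacent cells or from a compactness/contradiction argument; ensuring $h$-uniformity of the constant is the genuinely delicate point, whereas the scaling estimates above are routine.
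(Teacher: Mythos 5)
This lemma is not proved in the paper at all: it is quoted verbatim (up to the quasi-uniform rescaling $|e|^{-2}\sim h^{-2}$) from \cite{Poincare_Friedrichs_2003}, so there is no in-paper argument to compare against. Your sketch is a legitimate and essentially correct alternative route, and all of the local steps check out: the reduction to the mean-zero case, the bound $\|v-\bar v_K\|_{0,K}\lesssim h_K|v|_{1,K}$, the three-term splitting of $\bar v_{K_1}-\bar v_{K_2}$ with $|\bar v_K-\bar v_e^K|\lesssim |v|_{1,K}$ via Lemma \ref{lem:TRACE}, and the bounded-overlap argument at the end are all routine scaling estimates. The one substantive ingredient you invoke without proof --- the $h$-uniform discrete Poincar\'{e}--Wirtinger inequality $\|w\|_{0,\Omega_h}^2\lesssim\sum_{e\in\E_h^o}(\bar v_{K_1}-\bar v_{K_2})^2$ for mean-zero piecewise constants --- is exactly where all the difficulty of the lemma is concentrated, so as written the argument is a reduction rather than a complete proof; you are right that a compactness argument cannot deliver $h$-uniformity (the spaces change with $h$) and that one needs the path-summation (finite-volume) argument, which does hold on connected, shape-regular, quasi-uniform meshes. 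For contrast, Brenner's proof in \cite{Poincare_Friedrichs_2003} takes a different route: it passes from the piecewise $H^1$ function to a Crouzeix--Raviart-type interpolant determined by the edge means (which is where the functionals $\int_e[v]\dif s$ enter), then to a conforming relative by nodal averaging, and concludes with the \emph{continuous} Poincar\'{e}--Friedrichs inequality on $\Omega_h$; this avoids any graph-Laplacian eigenvalue estimate and, notably, does not require quasi-uniformity (the natural edge weight there is the local $|e|^{-2}$ rather than the global $h^{-2}$). Your argument uses quasi-uniformity twice ($|e|\sim h$ and the unit edge weights in the discrete Poincar\'{e} step), which is acceptable here because the paper assumes it, but it makes your version slightly less general than the cited result.
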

\begin{lem}\label{lem:inverse}
(Inverse Inequality \cite{FEM_Brenner}).  For any $K\in\T_h$ and $q\in P_l(K), \,0\leq m\leq l$, we have
\begin{equation*}
\begin{aligned}
|q|_{l, K}\lesssim h_K^{m-l}| q|_{m,K}.
\end{aligned}
\end{equation*}
\end{lem}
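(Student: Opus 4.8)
The plan is to reduce everything to a fixed reference triangle by an affine change of variables and then to exploit the equivalence of norms on the finite-dimensional space of polynomials. Let $\hat K$ be a reference triangle and let $F_K(\hat x)=B_K\hat x+b_K$ be the affine map with $F_K(\hat K)=K$, and for $q\in P_l(K)$ write $\hat q=q\circ F_K\in P_l(\hat K)$. First I would record the standard scaling of seminorms under this map: for every integer $j\ge 0$,
\[
|\hat q|_{j,\hat K}\lesssim \|B_K\|^{j}\,|\det B_K|^{-1/2}\,|q|_{j,K},
\qquad
|q|_{j,K}\lesssim \|B_K^{-1}\|^{j}\,|\det B_K|^{1/2}\,|\hat q|_{j,\hat K},
\]
which follow from the chain rule and the change of the volume element $\dif x=|\det B_K|\dif\hat x$. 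Shape-regularity then gives $\|B_K\|\lesssim h_K$ and $\|B_K^{-1}\|\lesssim \rho_K^{-1}\lesssim h_K^{-1}$, with the implied constants depending only on $\sigma$ and on the fixed reference element $\hat K$.

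The heart of the argument, and the step I expect to be the main obstacle, is a purely local estimate on $\hat K$: for all $\hat q\in P_l(\hat K)$,
\[
|\hat q|_{l,\hat K}\lesssim |\hat q|_{m,\hat K}.
\]
This is where the hypothesis $m\le l$ is essential, and it cannot be obtained by naive norm equivalence because $|\cdot|_{m,\hat K}$ is only a seminorm. The clean way to see it is to note that $|\cdot|_{m,\hat K}$ annihilates exactly $P_{m-1}(\hat K)$, while $|\cdot|_{l,\hat K}$ annihilates $P_{l-1}(\hat K)\supseteq P_{m-1}(\hat K)$ since $m\le l$. Hence both seminorms are well defined on the finite-dimensional quotient space $P_l(\hat K)/P_{m-1}(\hat K)$, on which $|\cdot|_{m,\hat K}$ is a genuine norm. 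Because all norms on a finite-dimensional space are equivalent, the seminorm $|\cdot|_{l,\hat K}$ is dominated by $|\cdot|_{m,\hat K}$ on the quotient, and therefore on all of $P_l(\hat K)$.

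Finally I would chain the three ingredients, applying the backward scaling at level $l$, then the local estimate, then the forward scaling at level $m$:
\begin{align*}
|q|_{l,K}
&\lesssim \|B_K^{-1}\|^{l}\,|\det B_K|^{1/2}\,|\hat q|_{l,\hat K} \\
&\lesssim \|B_K^{-1}\|^{l}\,|\det B_K|^{1/2}\,|\hat q|_{m,\hat K} \\
&\lesssim \|B_K^{-1}\|^{l}\,\|B_K\|^{m}\,|q|_{m,K}
\lesssim h_K^{m-l}\,|q|_{m,K},
\end{align*}
where the two factors $|\det B_K|^{\pm 1/2}$ cancel and the bounds $\|B_K^{-1}\|^{l}\lesssim h_K^{-l}$, $\|B_K\|^{m}\lesssim h_K^{m}$ produce the stated power of $h_K$. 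Everything except the middle seminorm-equivalence step is routine bookkeeping of affine scaling factors controlled by shape-regularity; once the kernel inclusion $P_{m-1}(\hat K)\subseteq P_{l-1}(\hat K)$ is used to pass to the quotient, the result follows.
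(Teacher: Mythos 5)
Your proof is correct and is essentially the standard argument from Brenner--Scott, which is exactly what the paper cites for this lemma without reproducing a proof: affine scaling to the reference element, equivalence of the seminorms on the finite-dimensional quotient $P_l(\hat K)/P_{m-1}(\hat K)$ (where the kernel inclusion $P_{m-1}\subseteq P_{l-1}$ makes $|\cdot|_{l,\hat K}$ well defined and $|\cdot|_{m,\hat K}$ a genuine norm), and scaling back using $\|B_K\|\lesssim h_K$, $\|B_K^{-1}\|\lesssim h_K^{-1}$ from shape-regularity. No gaps.
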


\section{ Model problem and the boundary value correction method}\label{sec3:model_problem}
In this section, we briefly introduce the model problem and the boundary value correction method \cite{1972}.
Given $f\in L^2(\Omega), g_N\in H^{-1/2}(\Gamma)$,
there exists a unique solution pair $(\bl u,p)\in {H_{g_N}(\mathrm{div},\Omega)}\times L_0^2(\Omega)$ such that
 \begin{equation}
\begin{aligned}
\bl u=&~-\nabla p ,\quad &\text{in} \,\Omega ,&\\ \label{primal_problem}
        \mathrm{div}\, \bl u=&~f ,\quad &\text{in} \, \Omega,\\
         \bl u\cdot\bl\n=&~g_N ,\quad &\text{on} \, \Gamma,
\end{aligned}
\end{equation}
where $\bl{\n}$ denotes the unit outward normal on $\Gamma$.
{Problem (\ref{primal_problem}) is well-posed as long as the following compatibility condition holds:
\begin{align}\label{g_comp_cond}
    \int_\Omega f\dif x=\int_\Gamma g_N\dif s.
\end{align}
}
{To} shift the boundary date from $\Gamma$ to $\Gamma_h$,
we assume that there exists a map $M_h:\Gamma_h\rightarrow\Gamma$ 
{defined as follows}
\begin{equation}
\begin{aligned}\label{map}
 M_h(\bl x_h):=\bl x_h+\delta_h(\bl x_h)\bl \nu_h(\bl x_h),
 \end{aligned}
\end{equation}
as shown in Fig. \ref{figure1}(b), where $\bl\nu_h$ is {an unit} 
vector defined on $\Gamma_h$, and $\delta_h(\bl x_h)=|M_h(\bl x_h)-\bl x_h|$.
Denote by $\bl x:=M_h(\bl x_h)$ and by $\wt{\bl\n}:=\bl\n\circ M_h$. 
{As shown in} \cite{Burman_boundary_value_correction_2018}, we have
\begin{equation}
\begin{aligned}\label{deta}
\delta=\sup_{\bl x_h\in\Gamma_h}\delta_h(\bl x_h)\lesssim h^2,\qquad \|\wt{\bl\n}-\bl\n_h\|_{L^\infty(\Gamma_h)}\lesssim h,
\end{aligned}
\end{equation}
where $\bl{\n}_h$ denotes the unit outward normal on $\Gamma_h$.

\begin{re}
In this paper, we do not specify the map $M_h$. We only require that the distance function $\delta_h(\bl x_h)$ satisfies (\ref{deta}).
In existing literatures, { $M_h$ is usually defined as the closest point map}, 
which is more complicated to implement.
\end{re}
\begin{figure}[!htbp]
\centering
\subfigure[]
{
  \includegraphics[height=3cm,width=5cm]{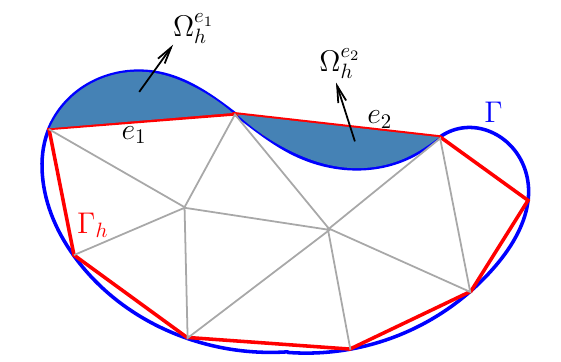}
  }\qquad\qquad
\subfigure[]
  {
\begin{tikzpicture}
\draw[red,very thick] (-1.71,0.8)--(1.69,0.8);
\draw[blue, thick] (1.8,0.6) arc (25:155:2);
\draw[gray, thick,->] (-0.3,0.8)--(-0.6,1.705);
\draw[blue, thick,->](-0.6,1.705)--(-0.85,2.5);
\draw (-0.1,0.7) node [below] {${\bm x_h}$};
\draw[red] (1,0.4) node [right] {$\Gamma_h$};
\draw[blue] (1.8,0.6) node[right] {$\Gamma$};
\draw (0.15,1.2) node {$\delta_h(\bm x_h)$};
\draw (-0.7,1.6) node[below] {$\bm x$};
\draw (-0.2,2.2) node[left] {$\bl \nu_h$};
\filldraw [black](-0.3,0.8) circle [radius=2pt];
\filldraw [black](-0.6,1.67) circle [radius=2pt];
\end{tikzpicture}
}\quad
\caption{(a). The true boundary $\Gamma$ (blue curve), the approximated boundary $\Gamma_h$ (red lines) and {the typical region $\cup_{e\in\mathcal{E}_{h}^b}\Omega_h^{e}$ bounded by $\Gamma$ and $\Gamma_h$.} (b). The distance $\delta_h(\bl x_h)$ and the unit vector $\bl{\nu_h}$ to $\Gamma_h$. }
\label{figure1}
\end{figure}

Assuming  $\bl v$ is sufficiently smooth in the strip between $\Gamma$ and $\Gamma_h$ 
{to admit} an $m$-th order Taylor expansion pointwise
$$\bl v(M_h(\bl x_h))=\sum_{j=0}^{m}\frac{\delta_h^j(\bl x_h)}{j!}
\partial_{\bl\nu_h}^j \bl v(\bl x_h)+R^m\bl v(\bl x_h),\qquad \text{on} \, \Gamma_h,$$
where $\partial_{\bl\nu_h}^j$ is the $j$-th partial derivative 
in the $\bl\nu_h$ direction, and the remainder $R^m\bl v(\bl x_h)$ satisfies
$$|R^m\bl v(\bl x_h)|=o(\delta^m).$$

{For notational convenience, we define}
\begin{equation}
\begin{aligned}\label{T1}
T^m\bl v:=\sum_{j=0}^{m}\frac{\delta_h^j(\bl x_h)}{j!}\partial_{\bl\nu_h}^j \bl v(\bl x_h), \quad
T_1^m\bl v:=\sum_{j=1}^{m}\frac{\delta_h^j(\bl x_h)}{j!}\partial_{\bl\nu_h}^j \bl v(\bl x_h).
\end{aligned}
\end{equation}
Both $T^m \bl v$ and $T_1^m \bl v$ are functions {defined} on $\Gamma_h$.
We denote $\wt{\bl v}(\bl x_h):=\bl v\circ M_h(\bl x_h)$, which is also a function on $\Gamma_h$. 
Thus, we have
\begin{equation}
\begin{aligned}\label{T-R}
T^m\bl v-\wt{\bl v}=-R^m \bl v.
\end{aligned}
\end{equation}

For any $e\in\E_h^b$, by choosing local coordinates $(\xi,\eta)$, 
we {define $\Omega_h^e:=\{(\xi,0): 0\leq \xi\leq h_e\}$ with the condition that 
$\eta>0$ in $\Omega_h^e$
(See Fig. \ref{figure1}(a)). 
We present the following three lemmas.}
\begin{lem}\label{Omega_he}
For any $e\in\E_h^b$, one gets
\begin{equation*}
\begin{aligned}
\|\bl v-\wt{\bl v}\|_{0,e}^2\lesssim \delta_h\|\nabla \bl v\|_{0,\Omega_h^e}^2,\qquad \forall \bl v\in \bl H^1(\Omega).
\end{aligned}
\end{equation*}
\end{lem}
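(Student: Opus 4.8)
The plan is to reduce the estimate to a one–dimensional fundamental theorem of calculus along the fibres of the map $M_h$, followed by Cauchy--Schwarz. Fix $e\in\E_h^b$ and a point $\bl x_h\in e$. Since $M_h(\bl x_h)=\bl x_h+\delta_h(\bl x_h)\bl\nu_h(\bl x_h)$, I would parametrise the segment joining $\bl x_h$ to $M_h(\bl x_h)$ by $\gamma(t)=\bl x_h+t\,\bl\nu_h(\bl x_h)$ for $t\in[0,\delta_h(\bl x_h)]$, which by construction lies inside the strip $\Omega_h^e$. For smooth $\bl v$ the fundamental theorem of calculus gives
$$\wt{\bl v}(\bl x_h)-\bl v(\bl x_h)=\bl v(M_h(\bl x_h))-\bl v(\bl x_h)=\int_0^{\delta_h(\bl x_h)}\partial_{\bl\nu_h}\bl v(\gamma(t))\dif t,$$
and a density argument extends this identity to all $\bl v\in\bl H^1(\Omega)$ (for almost every fibre, using Fubini together with the absolute continuity of Sobolev functions on lines).

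Taking absolute values, using $|\partial_{\bl\nu_h}\bl v|\leq|\nabla\bl v|$ since $\bl\nu_h$ is a unit vector, and applying the Cauchy--Schwarz inequality in $t$, I would obtain the pointwise bound
$$|\bl v(\bl x_h)-\wt{\bl v}(\bl x_h)|^2\leq\delta_h(\bl x_h)\int_0^{\delta_h(\bl x_h)}|\nabla\bl v(\gamma(t))|^2\dif t.$$
Integrating this inequality over $e$ with respect to arc length and bounding the leading factor $\delta_h(\bl x_h)$ by its supremum $\delta_h$ on $e$ yields
$$\|\bl v-\wt{\bl v}\|_{0,e}^2\leq\delta_h\int_e\int_0^{\delta_h(\bl x_h)}|\nabla\bl v(\gamma(t))|^2\dif t\dif s.$$
It then remains to recognise the double integral on the right as a constant multiple of $\|\nabla\bl v\|_{0,\Omega_h^e}^2$.

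The main obstacle is precisely this last geometric step: the map $(\bl x_h,t)\mapsto\bl x_h+t\,\bl\nu_h(\bl x_h)$ is a change of variables from the product fibre domain onto $\Omega_h^e$, and I must check that its Jacobian is bounded above and below by constants independent of $h$. Working in the local coordinates $(\xi,\eta)$ introduced before the lemma, with $\bl\nu_h$ aligned with the $\eta$–direction, this Jacobian differs from $1$ by a term controlled by the mismatch between $\Gamma_h$ and $\Gamma$ and by the smallness $\delta\lesssim h^2$ from (\ref{deta}); hence it is uniformly equivalent to the identity and the fibred integral is comparable to the Euclidean integral over $\Omega_h^e$. Substituting this comparison into the previous display gives $\|\bl v-\wt{\bl v}\|_{0,e}^2\lesssim\delta_h\|\nabla\bl v\|_{0,\Omega_h^e}^2$, which is the claim. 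The only care needed is to ensure that the segments $\gamma$ neither leave $\Omega$ nor overlap, both of which follow from the definition of $\Omega_h^e$ and the fact that $\bl\nu_h$ points into the strip.
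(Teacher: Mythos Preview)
Your argument is correct and follows essentially the same line as the paper: fundamental theorem of calculus along the fibre from $\bl x_h$ to $M_h(\bl x_h)$, Cauchy--Schwarz in the fibre variable, then integration over $e$. The only difference is cosmetic: the paper works from the outset in the local coordinates $(\xi,\eta)$ with $\eta$ normal to $e$, so the change-of-variables you discuss at the end becomes the identity and the Jacobian issue never appears explicitly; your version carries the map $(\bl x_h,t)\mapsto\bl x_h+t\,\bl\nu_h$ abstractly and then argues the Jacobian is uniformly bounded, which is a slightly more general way of saying the same thing.
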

\begin{proof}
 {In the local coordinate system $(\xi,\eta)$, 
we have $\dfrac{\pt}{\pt\hspace{-0.12cm}\bl{\n}_h}=\dfrac{\pt}{\pt\eta}$ on $\Omega_h^e$. 
From the definition of $\wt{\bl v}$, we can then conclude that}
\begin{equation*}
\begin{aligned}
|(\bl v-\wt{\bl v})(\xi,0)|^2&=|\bl v(\xi,0)-\bl v(\xi,\delta_h(\xi))|^2\\
&=\left|\int_0^{\delta_h(\xi)}\frac{\pt \bl v}{\pt\eta}(\xi,\eta)\dif\eta\right|^2\\
&\lesssim \delta_h\int_0^{\delta_h(\xi)}|\frac{\pt \bl v}{\pt\eta}(\xi,\eta)|^2\dif\eta,
\end{aligned}
\end{equation*}
and hence, by integrating with respect to $\xi$, we obtain
\begin{equation*}
\begin{aligned}
\int_e|(\bl v-\wt{\bl v})(\xi,0)|^2\dif \xi&\lesssim\delta_h \int_e\int_0^{\delta_h(\xi)}\left|\frac{\pt \bl v}{\pt\eta}(\xi,\eta)\right|^2\dif\eta\dif\xi\\
&\lesssim\delta_h\|\nabla \bl v\|_{0,\Omega_h^e}^2.
\end{aligned}
\end{equation*}
\end{proof}

\begin{lem}\label{lem:B_K1994}
  (cf. \cite{Bramble_King1994}). For each $e\in\E_h^b$ and $v\in H^1(\Omega \cup \Omega_h)$, one has
  \begin{align}
  \|v\|_{0,\Omega_h^e}&\lesssim \delta_h^{1/2}\|v\|_{0,\wt e}+\delta_h\|\nabla v\|_{0,\Omega_h^e}.\label{BK_Me2Me}
  \end{align}
  Moreover, when $v|_{\pt \Omega}=0$, one has
  \begin{equation}
  \begin{aligned}\label{BK_e2Omeg_e}
  \|v\|_{0,\Omega_h^e}\lesssim \delta_h\|v\|_{1,\Omega_h^e}.
  \end{aligned}
  \end{equation}
  \end{lem}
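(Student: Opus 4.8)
The plan is to establish the first inequality (\ref{BK_Me2Me}) by a one-dimensional fundamental-theorem-of-calculus argument in the $\eta$-direction, this time anchored at the curved boundary $\wt e$ rather than at $e$, and then to read off (\ref{BK_e2Omeg_e}) as an immediate special case under the vanishing-trace hypothesis. Working in the same local coordinates $(\xi,\eta)$ as in Lemma \ref{Omega_he}, where $\Omega_h^e=\{(\xi,\eta):0\le\xi\le h_e,\ 0\le\eta\le\delta_h(\xi)\}$ with $e$ at $\eta=0$ and $\wt e\subset\Gamma$ at $\eta=\delta_h(\xi)$, I would start from the identity
$$v(\xi,\eta)=v(\xi,\delta_h(\xi))-\int_\eta^{\delta_h(\xi)}\frac{\pt v}{\pt\eta}(\xi,s)\dif s,\qquad (\xi,\eta)\in\Omega_h^e.$$
This is the same device used in Lemma \ref{Omega_he}, the only change being that we keep the boundary value $v(\xi,\delta_h(\xi))$ on $\wt e$ as a genuine term instead of subtracting it off.

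From here the estimate is routine. Taking absolute values, applying $(a+b)^2\lesssim a^2+b^2$, and using Cauchy--Schwarz on the integral term gives
$$|v(\xi,\eta)|^2\lesssim |v(\xi,\delta_h(\xi))|^2+\delta_h(\xi)\int_0^{\delta_h(\xi)}\Big|\frac{\pt v}{\pt\eta}(\xi,s)\Big|^2\dif s.$$
Since the right-hand side is independent of $\eta$, integrating over $\eta\in(0,\delta_h(\xi))$ contributes a factor $\delta_h(\xi)$ to each term, and then integrating over $\xi\in(0,h_e)$, together with the uniform bound $\delta_h(\xi)\le\delta_h$, yields
$$\|v\|_{0,\Omega_h^e}^2\lesssim \delta_h\int_0^{h_e}|v(\xi,\delta_h(\xi))|^2\dif\xi+\delta_h^2\|\nabla v\|_{0,\Omega_h^e}^2.$$
Taking square roots then recovers (\ref{BK_Me2Me}), once the remaining $\xi$-integral is identified with $\|v\|_{0,\wt e}^2$.

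The one point demanding care — and essentially the only genuine obstacle — is precisely this identification. The surface measure along $\wt e$ is $\dif s=\sqrt{1+(\delta_h'(\xi))^2}\,\dif\xi$, so the plain $\xi$-integral equals $\|v\|_{0,\wt e}^2$ only up to the Jacobian factor $\sqrt{1+(\delta_h'(\xi))^2}$. I would control this factor using the geometric regularity of the configuration: since $\Gamma$ is smooth and $\delta\lesssim h^2$ by (\ref{deta}), the graph $\eta=\delta_h(\xi)$ has uniformly bounded slope in these local coordinates, whence $1\le\sqrt{1+(\delta_h'(\xi))^2}\lesssim 1$ and the two integrals are comparable up to a constant independent of $h$. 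This furnishes $\int_0^{h_e}|v(\xi,\delta_h(\xi))|^2\dif\xi\eqsim\|v\|_{0,\wt e}^2$ and completes (\ref{BK_Me2Me}).

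Finally, the second inequality (\ref{BK_e2Omeg_e}) follows at once: if $v|_{\pt\Omega}=0$, then in particular $v$ vanishes on $\wt e\subset\Gamma$, so the boundary term $\|v\|_{0,\wt e}$ drops out of (\ref{BK_Me2Me}), leaving $\|v\|_{0,\Omega_h^e}\lesssim\delta_h\|\nabla v\|_{0,\Omega_h^e}\le\delta_h\|v\|_{1,\Omega_h^e}$, as claimed.
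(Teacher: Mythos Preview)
Your argument is correct and is the standard one-dimensional Poincar\'e/fundamental-theorem-of-calculus computation for thin strips. Note, however, that the paper does not actually supply its own proof of this lemma: it is stated with a citation to Bramble--King \cite{Bramble_King1994} and left unproved, so there is no in-paper argument to compare against. Your proof is essentially the same calculation as the paper gives for Lemma~\ref{Omega_he}, with the anchor point moved from $e$ to $\wt e$, exactly as you describe; the Jacobian issue you flag is the only real subtlety and your handling of it (bounded slope of $\Gamma$ over $e$ for smooth $\Gamma$) is the standard resolution.
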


\begin{lem}\label{Omega_he2K}
(cf. \cite{Yiliu_WG}). For $K\in \T_h^b$ and $q\in P_j(K)$, one has
\begin{equation*}
\begin{aligned}
\sum_{e\subset\pt K\cap\Gamma_h}\|q\|_{0,\Omega_h^e}^2\lesssim h_K\|q\|_{0,K}^2.
\end{aligned}
\end{equation*}
\end{lem}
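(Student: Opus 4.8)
The plan is to reduce everything to norms on $K$ by repeatedly trading one power of the strip width $\delta_h$ for one derivative, and then to strip off the derivatives using the trace and inverse inequalities. Fix $K\in\T_h^b$ and an edge $e\subset\pt K\cap\Gamma_h$, and use the local coordinates $(\xi,\eta)$ introduced just before the statement, so that $\Omega_h^e=\{(\xi,\eta):0\le\xi\le h_e,\ 0<\eta<\delta_h(\xi)\}$ and $\pt/\pt\eta$ is the normal derivative on $e$. Since $q\in P_j(K)$ extends as a polynomial to all of $\bbR^2$, every quantity below is well defined on the strip.

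First I would establish a one-step strip estimate, valid for every $w\in H^1(\Omega_h^e)$, namely
$$\|w\|_{0,\Omega_h^e}^2\lesssim \delta_h\|w\|_{0,e}^2+\delta_h^2\|\nabla w\|_{0,\Omega_h^e}^2.$$
This is obtained exactly as in the proof of Lemma \ref{Omega_he}: writing $w(\xi,\eta)=w(\xi,0)+\int_0^\eta \pt_\eta w(\xi,t)\dif t$, squaring, applying Cauchy--Schwarz to the integral, and integrating first in $\eta$ over $(0,\delta_h(\xi))$ and then in $\xi$ over $(0,h_e)$.

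Next I would iterate this bound. Applying it to $w=q$ produces a boundary term plus $\delta_h^2\|\nabla q\|_{0,\Omega_h^e}^2$; applying it componentwise to $\nabla q$, then to $\nabla^2 q$, and so on, and using that $\nabla^{j+1}q\equiv 0$ because $q\in P_j(K)$, the recursion terminates after $j+1$ steps and gives
$$\|q\|_{0,\Omega_h^e}^2\lesssim \sum_{l=0}^{j}\delta_h^{\,2l+1}\|\nabla^l q\|_{0,e}^2.$$
Each edge term is then transferred to $K$: by the Trace Inequality (Lemma \ref{lem:TRACE}) and the Inverse Inequality (Lemma \ref{lem:inverse}),
$$\|\nabla^l q\|_{0,e}^2\le\|\nabla^l q\|_{0,\pt K}^2\lesssim h_K^{-1}|q|_{l,K}^2+h_K|q|_{l+1,K}^2\lesssim h_K^{-1-2l}\|q\|_{0,K}^2,$$
the last step using $|q|_{l,K}\lesssim h_K^{-l}\|q\|_{0,K}$ and $|q|_{l+1,K}\lesssim h_K^{-(l+1)}\|q\|_{0,K}$.

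Finally I would collect the powers. Combining the last two displays yields $\|q\|_{0,\Omega_h^e}^2\lesssim \sum_{l=0}^{j}(\delta_h/h_K)^{2l}\,(\delta_h/h_K)\,\|q\|_{0,K}^2$. Since $\delta_h\lesssim h^2$ by (\ref{deta}) and the mesh is quasi-uniform, so that $h\sim h_K$, we get $\delta_h/h_K\lesssim h_K\le 1$; hence the geometric sum is bounded by a constant, the $l=0$ term dominates, and $\|q\|_{0,\Omega_h^e}^2\lesssim (\delta_h/h_K)\|q\|_{0,K}^2\lesssim h_K\|q\|_{0,K}^2$. Summing over the at most three edges $e\subset\pt K\cap\Gamma_h$ completes the argument. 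The main obstacle is precisely the gradient term $\delta_h^2\|\nabla q\|_{0,\Omega_h^e}^2$ in the one-step estimate: because $\nabla q$ lives on the thin strip lying outside $K$, the inequality cannot be closed in a single application, and one must either iterate as above or rescale $K$ to a reference element and invoke norm equivalence of polynomials on the rescaled strip. In either route the decisive point is the bookkeeping of powers of $\delta_h$ and $h_K$ together with quasi-uniformity, which is what forces the favorable factor $\delta_h/h_K\lesssim h_K$ and makes the series converge.
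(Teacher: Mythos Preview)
Your argument is correct. The paper itself does not supply a proof of this lemma, merely citing \cite{Yiliu_WG}, so there is no in-text argument to compare against. Your iterated strip estimate---terminated after $j{+}1$ steps because $q\in P_j$, then transferred back to $K$ via the trace and inverse inequalities---is a standard and self-contained route; the alternative you mention (mapping to a reference element and using norm equivalence of polynomials on the rescaled strip) is the other common approach. In either case the decisive observation is $\delta_h/h_K\lesssim h_K$ by (\ref{deta}) and quasi-uniformity, which you isolate correctly.
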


\section{The finite element discretization }\label{sec4:discrete}
{This section introduces the discrete space and a variational formulation. Then, we analyse the well-posdeness of the discrete problem.}
Let $k\geq 1$ be a given integer. 
{We define} the discrete spaces $V_h$ and $Q_h$ {as follows}
 \begin{equation*}
\begin{aligned}
V_h&=\{\bl v_h\in H(\mathrm{div},\Omega_h): \bl v_h|_K\in RT_k(K),K\in\T_h\},\\
Q_h&=\{q_h\in L^2(\Omega_h): q_h|_K\in P_k(K),K\in\T_h\},\\
\end{aligned}
\end{equation*}
where $RT_k(K):=\bl P_k(K)\oplus \bl xP_k(K)$, {with $ P_k(K)$ denoting 
the space of polynomials on $K$
with degree less than or equal to $k$, and $\bl P_k(K)$ representing the corresponding vector space.
We denote} $Q_{0h}:=Q_{h}\cap L_0^2(\Omega_h)$.

For any $K\in\T_h$, we define the local interpolation operator $I_K:\bl H^s(K)\rightarrow RT_k(K), s>1/2,$
{utilizing} the degrees of freedom (dofs.) of Raviart-Thomas finite element \cite{MFEM}.
\begin{equation}
\left\{
\begin{aligned}\label{Pi_k}
\lal I_K \bl v_h\cdot\bl\n_h,\phi_k \ral_e&=\lal  \bl v_h\cdot\bl\n_h,\phi_k \ral_e,\quad &\forall\,&\phi_k\in P_k(e),\,\forall\,e\subset\pt K,\\
( I_K \bl v_h,\bl\psi_{k-1})_K&=(\bl v_h,\bl\psi_{k-1})_K,\quad &\forall\,&\bl\psi_{k-1}\in \bl P_{k-1}(K).
\end{aligned}
\right.
\end{equation}

Define the $L^2$-orthogonal projection $\Pi_k^{0}:L^2(\Omega_h)\rightarrow Q_h$ such that
for any $K\in \T_h$, {the restriction} $\Pi_k^{0,K}:=\Pi_k^0|_K$ satisfies
$$(\Pi_k^{0,K}\phi,q_h)_K=(\phi,q_h)_K,\qquad\forall\,q_h\in Q_h.$$
For every $e\in\E_h$, let $\Pi_k^{0,e}$ {denote the} $L^2$-orthogonal projection onto $P_k(e)$.

{Next, we state the approximation results of the nodal interpolant and $L^2$-orthogonal projections.
The derivation of the following results is
standard \cite{book:RT_app} and \cite{FEM_Brenner}.}

\begin{lem}\label{lem:interpolation_app}
(cf. \cite{book:RT_app}). Let $m$ and $k$ be nonnegative integers such that $0\leq m\leq k+1$. 
Then 
\[
|\bl v-I_K \bl v|_{m,K}\lesssim h^{k-m+1}|\bl v|_{k+1,K}, \qquad \forall\,\bl v\in \bl H^{k+1}(K).
\] 
\end{lem}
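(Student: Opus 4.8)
The plan is to establish the estimate on a fixed reference element and transfer it to $K$ by the Piola transform, using the Bramble--Hilbert lemma to exploit the polynomial preservation of the Raviart--Thomas interpolant. First I would record the crucial algebraic property of $I_K$: since the defining conditions (\ref{Pi_k}) reproduce every element of $RT_k(K)$, and $\bl P_k(K)\subset RT_k(K)$, the operator $I_K$ fixes all polynomial vector fields of degree at most $k$, so that $\bl v - I_K\bl v$ vanishes identically whenever $\bl v\in\bl P_k(K)$.

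Next I would introduce the affine map $F_K:\hat K\to K$, $F_K(\hat{\bl x})=B_K\hat{\bl x}+\bl b_K$, from a fixed reference triangle $\hat K$, together with the associated Piola transform $\bl v = (\det B_K)^{-1}B_K\,\hat{\bl v}\circ F_K^{-1}$. The key structural fact is that $I_K$ commutes with the Piola transform, i.e. $\widehat{I_K\bl v}=I_{\hat K}\hat{\bl v}$; this follows because the dofs in (\ref{Pi_k})---normal moments on edges and interior vector moments---are preserved, up to the Piola scaling, under $F_K$. Consequently the interpolation error on $K$ is precisely the Piola image of the reference error $\hat{\bl v}-I_{\hat K}\hat{\bl v}$.

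On the reference element $I_{\hat K}$ is a bounded linear operator on $\bl H^s(\hat K)$, $s>1/2$, that reproduces $\bl P_k(\hat K)$; applying the Bramble--Hilbert lemma to the bounded, $\bl P_k$-annihilating map $\hat{\bl v}\mapsto \hat{\bl v}-I_{\hat K}\hat{\bl v}$ yields $|\hat{\bl v}-I_{\hat K}\hat{\bl v}|_{m,\hat K}\lesssim|\hat{\bl v}|_{k+1,\hat K}$ for all $0\le m\le k+1$. I would then push this inequality back to $K$ through the standard scaling relations for seminorms under an affine map combined with the Piola transform; in two dimensions these produce the factors $\|B_K\|\sim h_K$, $\|B_K^{-1}\|\sim h_K^{-1}$ and $|\det B_K|\sim h_K^2$, whose net effect is the power $h_K^{k-m+1}$. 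Finally, since $k-m+1\ge 0$ and $h_K\le h$, we may replace $h_K$ by $h$ to arrive at the stated bound.

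I expect the main obstacle to be the bookkeeping of the Piola scaling: unlike Lagrange interpolation, the field itself is transformed by $(\det B_K)^{-1}B_K$, so every power of $B_K$, $B_K^{-1}$ and $\det B_K$ arising from differentiating $m$ times and from the $(k+1)$-st reference seminorm must be tracked and then controlled uniformly in $h$ through the shape-regularity constant $\sigma$. Verifying the commutation $\widehat{I_K\bl v}=I_{\hat K}\hat{\bl v}$ carefully---in particular that the edge normal-moment dofs transform consistently---is the other point requiring attention, though it is classical for Raviart--Thomas elements and may simply be cited.
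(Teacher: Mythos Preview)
The paper does not supply a proof of this lemma: it is stated with the citation ``(cf.\ \cite{book:RT_app})'' and used as a known result. Your proposal is the standard argument that underlies the cited reference---polynomial reproduction of $I_K$ on $\bl P_k(K)\subset RT_k(K)$, commutation of the Raviart--Thomas interpolant with the Piola transform, Bramble--Hilbert on the reference element, and affine/Piola scaling back to $K$---and the bookkeeping you outline (the factors $\|B_K\|\sim h_K$, $\|B_K^{-1}\|\sim h_K^{-1}$, $|\det B_K|\sim h_K^2$ combining to $h_K^{k-m+1}$) is correct under shape regularity. Since the paper treats this as a quoted result, your sketch is entirely adequate; the only care needed is, as you note, verifying boundedness of $I_{\hat K}$ on $\bl H^{k+1}(\hat K)$ (immediate since the dofs are continuous on $\bl H^s$, $s>1/2$, and the range is finite-dimensional) and the Piola commutation, both of which are classical and may simply be cited.
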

{
\begin{lem}\label{lem:orthogonal projection_app}
  (cf. \cite{FEM_Brenner}).
  Let $m$ and $k$ be nonnegative integers such that $0\leq m\leq k+1$. 
  Then 
\begin{align*}
  |v-\Pi_k^{0,K} v|_{m,K}&\lesssim h^{k-m+1}|v|_{k+1,K}, \qquad &\forall\,& v\in H^{k+1}(K),\\
  |v-\Pi_k^{0,e}v|_{m,e}&\lesssim h^{k-m+1}|v|_{k+1,e}, \qquad &\forall\,& v\in H^{k+1}(e), e\subset \pt K.
  \end{align*}
\end{lem}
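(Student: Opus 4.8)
The plan is to reduce both estimates to the Bramble--Hilbert lemma on a fixed reference element, transporting the result back to $K$ (respectively $e$) by an affine scaling argument; this is the template behind the standard references \cite{book:RT_app,FEM_Brenner}. The two assertions are structurally identical: the first lives in $\bbR^2$ with reference element the unit triangle $\hat K$, while the second lives in $\bbR^1$ with reference element the unit interval $\hat e$. I therefore carry out the argument in detail only for $\Pi_k^{0,K}$ and indicate the dimensional bookkeeping for $\Pi_k^{0,e}$ at the end.

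First I would fix an affine isomorphism $F_K(\hat x)=B_K\hat x+b_K$ from $\hat K$ onto $K$. Shape-regularity gives $\|B_K\|\lesssim h_K$ and $\|B_K^{-1}\|\lesssim h_K^{-1}$, and since $|\det B_K|\sim h_K^2$ the usual chain-rule bounds yield, for $0\le m\le k+1$,
\begin{equation*}
|\hat w|_{m,\hat K}\lesssim h_K^{m-1}|w|_{m,K},\qquad |w|_{m,K}\lesssim h_K^{1-m}|\hat w|_{m,\hat K},
\end{equation*}
where $\hat w=w\circ F_K$. The key structural fact is that the $L^2$-projection is covariant under this map: because $q\mapsto q\circ F_K$ is a bijection of $P_k(K)$ onto $P_k(\hat K)$ and the constant Jacobian factors out of every inner product, the defining orthogonality of $\Pi_k^{0,K}v$ transports to that of $\Pi_k^{0,\hat K}\hat v$, so that $\widehat{\Pi_k^{0,K}v}=\Pi_k^{0,\hat K}\hat v$.

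Next I would invoke the Bramble--Hilbert lemma on $\hat K$. The operator $\hat L:=I-\Pi_k^{0,\hat K}$ is linear, annihilates $P_k(\hat K)$ since the $L^2$-projection reproduces polynomials of degree $\le k$, and is bounded from $H^{k+1}(\hat K)$ into $H^m(\hat K)$; the latter uses $L^2$-stability of the projection together with the equivalence of all norms on the finite-dimensional space $P_k(\hat K)$ to control the higher seminorms $m\ge 1$. Hence $|\hat v-\Pi_k^{0,\hat K}\hat v|_{m,\hat K}\lesssim|\hat v|_{k+1,\hat K}$. Combining this with covariance and the two scaling bounds gives
\begin{equation*}
|v-\Pi_k^{0,K}v|_{m,K}\lesssim h_K^{1-m}|\hat v-\Pi_k^{0,\hat K}\hat v|_{m,\hat K}\lesssim h_K^{1-m}|\hat v|_{k+1,\hat K}\lesssim h_K^{1-m}h_K^{k}|v|_{k+1,K}=h_K^{k-m+1}|v|_{k+1,K},
\end{equation*}
and $h_K\le h$ finishes the first estimate. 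For the edge projection the identical scheme applies on $\hat e=[0,1]$; the only change is that $|\det|\sim h_e$, which replaces the exponents above by their one-dimensional counterparts and again produces $h_e^{k-m+1}\lesssim h^{k-m+1}$.

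The one step requiring genuine care --- and the main obstacle if one wants a self-contained argument --- is the $H^m$-boundedness of $\Pi_k^{0,\hat K}$ for $m\ge 1$, since the projection is defined only through $L^2$-orthogonality; everything else is bookkeeping. This is resolved cleanly on the reference element, where an inverse inequality makes $|\Pi_k^{0,\hat K}\hat v|_{m,\hat K}\lesssim\|\Pi_k^{0,\hat K}\hat v\|_{0,\hat K}\le\|\hat v\|_{0,\hat K}$, the scale-dependence being absorbed once and for all into the fixed reference constant.
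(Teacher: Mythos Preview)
Your proof is correct and follows the standard Bramble--Hilbert plus affine-scaling argument. Note that the paper does not actually supply its own proof of this lemma: it is stated with a ``(cf.\ \cite{FEM_Brenner})'' citation and treated as a known result, so there is no paper proof to compare against beyond the reference itself --- and your argument is precisely the textbook route one finds in Brenner--Scott.
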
}

\begin{lem}\label{lem:extension} (cf. \cite{book:Extension}).
  {Let $\Omega$ be a Lipschitz domain in $\bbR^2$ and $s\in\bbR$, with $s\geq 0$. 
  Then there exists an extension operator $E: H^s(\Omega)\to H^s(\bbR^2)$ such that}
    \begin{align*}
      E v|_{\Omega}= v,\quad \|E v\|_{s,\bbR^2}\lesssim \|v\|_{s,\Omega},\qquad \forall v\in H^s(\Omega),
    \end{align*}
    where the hidden constant depends on $s$ but not on the diameter of $\Omega$. 
    {Additionally, we have}
    $$\|Ev\|_{s,\Omega_{h}}\leq\|E v\|_{s,\bbR^2}\lesssim \|Ev\|_{s,\Omega}.$$
  
    \end{lem}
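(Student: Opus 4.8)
The statement packages the classical Sobolev extension theorem for Lipschitz domains together with two elementary monotonicity observations, so the plan is to reduce each clause to a known ingredient rather than to construct $E$ from scratch. The existence and boundedness of $E$ is the substantive part and is entirely standard; the two-sided chain is then a short bookkeeping argument.

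First I would invoke a Stein-type universal extension operator: for a bounded Lipschitz domain $\Omega\subset\bbR^2$ there is a single linear operator $E$, independent of $s$, with $Ev|_\Omega=v$ that is bounded $H^s(\Omega)\to H^s(\bbR^2)$ for every $s\geq 0$ simultaneously. Concretely one flattens $\partial\Omega$ locally through the bi-Lipschitz charts supplied by the Lipschitz assumption, applies a weighted reflection across the flattened boundary in each chart, and glues the pieces by a partition of unity subordinate to the chart cover. This yields boundedness on the integer-order spaces $H^{\lfloor s\rfloor}$ and $H^{\lceil s\rceil}$, and the intermediate fractional orders follow by real interpolation between these, giving $\|Ev\|_{s,\bbR^2}\lesssim\|v\|_{s,\Omega}$ with a constant depending only on $s$ and the Lipschitz character of $\partial\Omega$. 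I expect the only genuinely delicate point here to be this simultaneous boundedness across all $s$ with one operator, which is exactly what the universal (as opposed to order-by-order) construction buys.

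For the claim that the hidden constant is independent of the diameter of $\Omega$, the plan is a rescaling argument: normalize $\Omega$ to a domain $\hat\Omega$ of unit diameter via the dilation $x\mapsto x/\mathrm{diam}(\Omega)$, apply the extension estimate on $\hat\Omega$, and pull back. Since a dilation acts on each homogeneous component of the $H^s$-norm by a fixed power of the diameter, the diameter factors cancel between the two sides of the estimate (or are absorbed favourably), leaving a constant that reflects only the shape, i.e. the Lipschitz character, together with $s$.

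Finally, the displayed two-sided chain is elementary. The left inequality $\|Ev\|_{s,\Omega_h}\leq\|Ev\|_{s,\bbR^2}$ is monotonicity of the $H^s$-norm under restriction to the subdomain $\Omega_h\subset\bbR^2$: writing the norm in Gagliardo/Slobodeckij form, the $L^2$ and integer-derivative contributions are integrals of nonnegative integrands over the smaller set $\Omega_h$, while the fractional seminorm is a double integral of a nonnegative kernel over $\Omega_h\times\Omega_h\subset\bbR^2\times\bbR^2$, so every term can only decrease. The right inequality is just the extension bound of the previous step combined with the identity $Ev|_\Omega=v$, which gives $\|Ev\|_{s,\Omega}=\|v\|_{s,\Omega}$ and hence $\|Ev\|_{s,\bbR^2}\lesssim\|v\|_{s,\Omega}=\|Ev\|_{s,\Omega}$. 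The one point to state carefully is that the monotonicity must be argued from the Gagliardo description rather than the $L^2$-derivative one, since it is the nonnegativity of the double-integral kernel that makes restriction decrease the \emph{fractional} seminorm.
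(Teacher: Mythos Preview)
The paper does not supply a proof of this lemma at all: it is stated with the attribution ``(cf.\ \cite{book:Extension})'' to Stein's monograph and then used as a black box, with the final displayed chain recorded as an immediate consequence. Your outline therefore goes well beyond the paper, and for the substantive parts---the existence and $H^s$-boundedness of a universal Stein extension, and the two-sided chain via monotonicity of the Gagliardo norm under restriction plus $Ev|_\Omega=v$---your sketch is correct and is exactly the standard justification one would give.

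The one step that does not work as written is the rescaling argument for diameter-independence. Under the dilation $x\mapsto x/d$ the seminorms $|\cdot|_{m}$ pick up different powers of $d$ for different $m$, so the inhomogeneous $H^s$-norms on the two sides of the extension estimate do \emph{not} scale by a common factor, and the claim that ``the diameter factors cancel'' fails unless you already know a seminorm-by-seminorm bound for $E$, which the argument is trying to avoid. In practice the Stein construction gives a constant depending on the Lipschitz character of $\partial\Omega$ (chart geometry and constants), and separating out a pure diameter dependence is delicate. This is not a real obstacle for the paper, however: $\Omega$ is a fixed physical domain throughout, the constant in Lemma~\ref{lem:extension} is simply absorbed into the generic ``$\lesssim$'', and the diameter-independence clause is never actually invoked in the error analysis.
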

    
    For brevity{,} we will also denote {extended functions} by $v^E=E v$ and $f^E=E f$ on $\bbR^2$.
   
\subsection{A variational formulation}
Before deriving a weak formulation, 
it is pointed out that $\bl u^E+\nabla p^E$, $\mathrm{div}\,\bl u^E-f^E$ 
are not {generally} equal to zero on $\Omega_h\backslash \Omega$ when $\Omega_h\nsubseteq \Omega$. 

Next, we multiply the first equation of (\ref{primal_problem}) by an arbitrary $\bl v_h\in V_h$
and {integrate} over $\Omega_h$.
{This leads to the following result}
\begin{equation}
\begin{aligned}\label{eq:deduce_form}
(\bl u^E,\bl v_h)_{\Omega_h}-(p^E,\mathrm{div}\, \bl v_h)_{\Omega_h}+\sum_{e\in\E_h^b}\lal\bl v_h\cdot\bl\n_h,p^E\ral_e=(\bl u^E+\nabla p^E,\bl v_h)_{\Omega_h},
\end{aligned}
\end{equation}
where we have used the fact that $\bl v_h\in V_h \subset H(\mathrm{div})$ implies that $\bl v_h\cdot\bl\n_h$ is continuous across each interior edge. 
Next, by (\ref{T-R}), one has
\begin{equation}
  \begin{aligned}\label{Taylor_ex}
(T^k\bl u^E+R^k \bl u^E)\cdot\wt{\bl\n}=\wt{g}_N, \qquad\text{on}\,\Gamma_h,
\end{aligned}
\end{equation}
where $\wt{g}_N$ is a pull-back of the Neumann boundary data $g_N$ from $\Gamma$ to $\Gamma_h$.
Then, the equality (\ref{eq:deduce_form}) can be rewritten {as}
\begin{equation*}
\begin{aligned}\label{eq:weak_1st}
(\bl u^E,\bl v_h)_{\Omega_h}&+\sum_{e\in\E_h^b}\lal h_K^{-1}T^k\bl u^E\cdot\wt{\bl\n}, T^k\bl v_h\cdot\wt{\bl\n}\ral_e-(p^E,\mathrm{div}\, \bl v_h)_{\Omega_h}\\
&+\sum_{e\in\E_h^b}\lal\bl v_h\cdot\bl\n_h,p^E\ral_e=\sum_{e\in\E_h^b}\lal h_K^{-1}(\wt{g}_N-R^k\bl u^E\cdot\wt{\bl\n}), T^k\bl v_h\cdot\wt{\bl\n}\ral_e+(\bl u^E+\nabla p^E,\bl v_h)_{\Omega_h}.
\end{aligned}
\end{equation*}

The second equation of (\ref{primal_problem}) can be derived by testing any $q_h\in Q_h$ over $\Omega_h$
\begin{equation*}
\begin{aligned}\label{weak_2nd}
(\mathrm{div}\, \bl u^E,q_h)_{\Omega_h}-(f^E,q_h)_{\Omega_h}=(\mathrm{div}\, \bl u^E-f^E,q_h)_{\Omega_h},
\end{aligned}
\end{equation*}
where we also {note that} $f^E=(\mathrm{div}\, \bl u)^E$, and $\mathrm{div}\, \bl u^E\neq (\mathrm{div}\, \bl u)^E$ on $\Omega_h\backslash \Omega$.

{To simplify the notation, for any $u$, $v \in V_h$ and $p\in Q_h$, we define the following bilinear forms}
\begin{equation}
\begin{aligned}\label{bilinear forms}
a_h(\bl u,\bl v):&=(\bl u,\bl v)_{\Omega_h}+\sum_{e\in\E_h^b}\lal h_K^{-1}T^k\bl u\cdot\wt{\bl\n}, T^k\bl v\cdot\wt{\bl\n}\ral_e,\\
b_{h1}(\bl v,p):&=-(\mathrm{div}\, \bl v,p)_{\Omega_h}+\sum_{e\in\E_h^b} \lal\bl v\cdot\bl\n_h,p\ral_e,\\
b_{h0}(\bl v,p):&=-(\mathrm{div}\, \bl v,p)_{\Omega_h}.\\
\end{aligned}
\end{equation}

Then we can present the discrete weak formulation: {\it Find $(\bl u_h,p_h)\in V_h\times {Q_{0h}}$ such that}
\begin{equation}
\left\{
\begin{aligned}\label{Wh1}
a_h(\bl u_h,\bl v_h)+b_{h1}(\bl v_h,p_h)&=\sum_{e\in\E_h^b}\lal h_K^{-1}\wt{g}_N, T^k\bl v_h\cdot\wt{\bl\n}\ral_e, \quad &\forall&\bl v_h\in V_h,\\
b_{h0}(\bl u_h,q_h)&=-(f^E,q_h)_{\Omega_h},  \quad &\forall &q_h\in {Q_{0h}}.
\end{aligned}
\right.
\end{equation}

{Note that a compatibility condition, as stated in (\ref{g_comp_cond}), is necessary for the well-posedness of the continuous problem outlined in (\ref{primal_problem}).
It is important to highlight that the compatibility mechanism for discrete problem works differently, 
with the key being to ensure
$$b_{h1}(\bl v_h,1)=0 \quad \text{and}\quad b_{h0}(\bl u_h,q_h)=-(f^E,q_h),\quad\forall q_h\in Q_{0h}.$$}
{Indeed, it becomes evident through integration by parts that}
$$b_{h1}(\bl v_h, 1)=0, \qquad \forall \bl v_h\in V_h.$$
For any $q_h=\mathrm{const}$, it holds
\begin{equation}
\begin{aligned}\label{compatibility_condition}
(\mathrm{div}\, \bl u_h,q_h)_{\Omega_h}-\sum_{e\in\E_h^b}\lal\bl u_h\cdot\bl\n_h, \bar q_h\ral_e
=(f^E,q_h)_{\Omega_h}-(f^E,\bar q_h)_{\Omega_h},
\end{aligned}
\end{equation}
where $\bar q_h=\frac{1}{|\Omega_h|}\int_{\Omega_h} q_h\dif x$. Moreover, for any $q_h\in Q_{0h}$, it implies $\bar q_h=0$.
Thus, for any $q_h\in Q_{0h}$, the equality (\ref{compatibility_condition}) coincides with the second equality in problem (\ref{Wh1}).
In this way, it demonstrates that (\ref{Wh1}) is now `compatible' in the traditional sense, as it also holds for $q_h\equiv \mathrm{const}.$

For finite-dimensional problems, the effect of the `compatibility' condition may reflect in the implementation procedure.
Actually, it is inconvenient to directly compute a basis function for $Q_{0h}$, which requires the basis functions to be mean-value free on $\Omega_h$.
One usually first computes basis functions for $Q_h$ to form a stiffness matrix $A$ and then modifies 
$A$ by adding a row and a column such that the discrete solution $p_h\in Q_{0h}$.

Define two mesh-dependent norms as follows
\begin{equation*}
\begin{aligned}
\|\bl v_h\|_{0,h}&=\left(\|\bl v_h\|_{0,\Omega_h}^2+\sum_{e\in\E_h^b}\|h_K^{-1/2}T^k\bl v_h\cdot\wt{\bl\n}\|_{0,e}^2\right)^{1/2},\quad &\forall&\bl v_h\in V_h,\\
\|q_h\|_{1,h}&=\left(\sum_{K\in\T_h}\|\nabla q_h\|_{0,K}^2+\sum_{e\in\E_h^o}h^{-1}\|[q_h]\|_{0,e}^2\right)^{1/2},\quad &\forall&q_h\in Q_{0h}.
\end{aligned}
\end{equation*}

\subsection{Well-posedness}
In this subsection, we {discuss} the well-posedness of the discrete problem (\ref{Wh1}). First, we list some relevant lemmas.
\begin{lem}\label{lem:T1_bounded}
For any $\bl v_h\in V_h$, we have
\begin{align}
\sum_{e\in\E_h^b}\|h_K^{-1/2} T_1^m\bl v_h\|_{0,e}^2&\lesssim \|\bl v_h\|_{0,\Omega_h}^2, \label{T1_bound}\\
\sum_{e\in\E_h^b}\|h_K^{-1/2} T^m\bl v_h\|_{0,e}^2&{\lesssim} h^{-2}\|\bl v_h\|_{0,\Omega_h}^2.\label{T0_bounded}
\end{align}
\end{lem}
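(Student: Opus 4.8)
<br>

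The plan is to estimate the two sums in Lemma~\ref{lem:T1_bounded} by reducing everything to the $L^2$-norm of $\bl v_h$ on the boundary elements $K\in\T_h^b$ and then invoking Lemma~\ref{Omega_he2K} to pass from the strip regions $\Omega_h^e$ back to the element $K$. The key structural observation is that the operators $T_1^m$ and $T^m$ are built from the directional derivatives $\partial_{\bl\nu_h}^j\bl v_h$ weighted by powers $\delta_h^j/j!$, and that on a boundary element $\bl v_h$ restricts to a polynomial in $RT_k(K)$. Thus each derivative is controlled by the inverse inequality (Lemma~\ref{lem:inverse}), which produces a factor $h_K^{-j}$, while $\delta_h\lesssim h^2$ from (\ref{deta}) supplies a compensating factor $h^{2j}$. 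The combination $\delta_h^j h_K^{-j}\lesssim h^{2j}h^{-j}=h^j$ is what drives the favorable scaling.

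First I would prove (\ref{T1_bound}). Since $T_1^m\bl v_h=\sum_{j=1}^m\frac{\delta_h^j}{j!}\partial_{\bl\nu_h}^j\bl v_h$ starts at $j=1$, I would bound the edge norm $\|h_K^{-1/2}T_1^m\bl v_h\|_{0,e}$ by first moving from the edge $e$ to the strip $\Omega_h^e$: because $\partial_{\bl\nu_h}^j\bl v_h$ is a polynomial, its restriction to $e$ and its $\Omega_h^e$-norm are comparable up to the geometric factors in Lemma~\ref{Omega_he2K}. Concretely, I would write $\delta_h^j\|\partial_{\bl\nu_h}^j\bl v_h\|_{0,e}^2$ and use the triangle inequality over $j$, then apply the inverse inequality on $K$ to each derivative, giving $|\bl v_h|_{j,K}\lesssim h_K^{-j}\|\bl v_h\|_{0,K}$. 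Collecting powers yields $\sum_{e}\|h_K^{-1/2}T_1^m\bl v_h\|_{0,e}^2\lesssim \sum_{K\in\T_h^b}h_K^{-1}\big(\sum_{j\geq1}h^{2j}h_K^{-j}\big)^2\|\bl v_h\|_{0,K}^2$, and since the lowest-order term $j=1$ contributes $h^{2}h_K^{-1}\cdot h_K^{-1}=h^2 h_K^{-2}\lesssim 1$ under quasi-uniformity, together with a further $h_K$ gained from the trace-to-element passage via Lemma~\ref{Omega_he2K}, the exponents balance so that the right-hand side is $\lesssim\sum_{K}\|\bl v_h\|_{0,K}^2=\|\bl v_h\|_{0,\Omega_h}^2$.

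For (\ref{T0_bounded}) the only difference is that $T^m$ includes the $j=0$ term $\bl v_h$ itself, which carries no smallness from $\delta_h$. This term contributes $\sum_e\|h_K^{-1/2}\bl v_h\|_{0,e}^2$, and after passing to the element with Lemma~\ref{Omega_he2K} one picks up $h_K^{-1}\cdot h_K\cdot\|\bl v_h\|_{0,K}^2$-type scaling; the extra inverse factor $h_K^{-1}\lesssim h^{-1}$ forces the weaker bound $h^{-2}\|\bl v_h\|_{0,\Omega_h}^2$. The remaining terms $j\geq1$ are exactly those controlled in (\ref{T1_bound}) and are therefore subdominant. Hence (\ref{T0_bounded}) follows by splitting $T^m\bl v_h=\bl v_h+T_1^m\bl v_h$ and applying (\ref{T1_bound}) to the second piece.

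The main obstacle I anticipate is bookkeeping the powers of $h_K$ versus $\delta_h$ cleanly and justifying the trace-type step that compares $\|\partial_{\bl\nu_h}^j\bl v_h\|_{0,e}$ with the element norm: one must use that $\partial_{\bl\nu_h}^j\bl v_h$ is polynomial on $K$ so that Lemma~\ref{Omega_he2K} applies directly to $q=\partial_{\bl\nu_h}^j\bl v_h\in P_{k-j}(K)$, rather than a generic $H^1$ trace which would introduce an uncontrolled higher seminorm. Ensuring the geometric factors from (\ref{deta}), the inverse inequality, and the quasi-uniformity hypothesis $h_K\gtrsim\tau h$ combine to exactly the claimed exponents is the delicate part; once the $j$-dependence is organized as a finite geometric-type sum dominated by its leading term, the estimates close.
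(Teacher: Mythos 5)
Your overall strategy coincides with the paper's: expand $T_1^m\bl v_h$ term by term, let $\delta_h\lesssim h^2$ from (\ref{deta}) compensate the factor $h_K^{-j}$ produced by the inverse inequality applied to $\partial_{\bl\nu_h}^j\bl v_h$, observe that the $j=1$ term dominates, and deduce (\ref{T0_bounded}) from (\ref{T1_bound}) by splitting $T^m\bl v_h=\bl v_h+T_1^m\bl v_h$ (the paper merely says ``similarly'' for the second bound, and your splitting is the natural way to make that precise). The one step that is wrong as written is your passage from the edge to the element: you invoke Lemma \ref{Omega_he2K}, but that lemma bounds $\sum_{e\subset\pt K\cap\Gamma_h}\|q\|_{0,\Omega_h^e}^2$ --- the $L^2$ norm over the crescent-shaped strip $\Omega_h^e$ between $e$ and $\Gamma$ --- by $h_K\|q\|_{0,K}^2$; it says nothing about the trace norm $\|q\|_{0,e}$ on the edge itself, which is the quantity actually appearing in the statement you are proving. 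The correct tool is the trace inequality of Lemma \ref{lem:TRACE} combined with the inverse inequality of Lemma \ref{lem:inverse}, giving $\|\partial_{\bl\nu_h}^j\bl v_h\|_{0,e}^2\lesssim h_K^{-1}\|\partial_{\bl\nu_h}^j\bl v_h\|_{0,K}^2\lesssim h_K^{-1-2j}\|\bl v_h\|_{0,K}^2$; note this is a \emph{loss} of $h_K^{-1}$, not the ``further $h_K$ gained'' that you claim from the trace-to-element passage, so your exponent bookkeeping at that point is off by $h_K^{2}$. With the correct inequality the $j$-th term of the left side of (\ref{T1_bound}) is bounded by $h_K^{-2}(\delta_h/h_K)^{2j}\|\bl v_h\|_{0,K}^2\lesssim h^{2j-2}\|\bl v_h\|_{0,K}^2$, which is $\lesssim\|\bl v_h\|_{0,K}^2$ for every $j\ge 1$, while the $j=0$ term contributes exactly the extra $h^{-2}$ in (\ref{T0_bounded}); so the power counting still closes and your conclusion survives, but the cited lemma must be replaced by Lemmas \ref{lem:TRACE} and \ref{lem:inverse}, which is precisely what the paper's proof does.
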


\begin{proof}
From the definition in (\ref{T1}), 
{along with the trace and inverse inequalities from Lemmas \ref{lem:TRACE} and \ref{lem:inverse}, we obtain}
\begin{equation*}
\begin{aligned}
  \sum_{e\in\E_h^b}\|h_K^{-1/2} T_1^m\bl v_h\|_{0,e}^2&
  =\sum_{e\in\E_h^b}h_K^{-1}\left\|\sum_{j=1}^{m}\frac{\delta_h^j}{j!}\partial_{\bl\nu_h}^j  \bl v_h\right\|_{0,e}^2\\
  &\lesssim \sum_{e\in\E_h^b}\sum_{j=1}^{m} h^{-1} \delta_h^{2j}\|\partial_{\bl\nu_h}^j  \bl v_h\|_{0,e}^2\\
  &\lesssim \sum_{K\in\T_h}\sum_{j=1}^{m} \left(\frac{ \delta_h}{h}\right)^{2j}h^{-2}\|\bl v_h\|_{0,K}^2\\
& \lesssim \sum_{K\in\T_h}\left(\frac{\delta_h}{h}\right)^{2}h^{-2}\|\bl v_h\|_{0,K}^2\\
& \lesssim \|\bl v_h\|_{0,\Omega_h}^2,
\end{aligned}
\end{equation*}
where we have utilized the fact that $\delta_h\lesssim h^2$ in the last inequality. 
Thus, the inequality (\ref{T1_bound}) is proved.
Similarly, we can prove (\ref{T0_bounded}).
\end{proof}

\begin{lem}\label{lem:vn_phi}
  For any $\bl v_h\in V_h$, we have
\begin{equation*}
\begin{aligned}
\sum_{e\in\E_h^b}\lal\bl v_h\cdot\bl\n_h,\phi\ral_e
\lesssim \left(\sum_{e\in\E_h^b} h\|\phi\|_{0,e}^2\right)^{1/2}\|\bl v_h\|_{0,h},
\quad \forall \phi\in{\underset{e\in\E_h^b}{\Pi} L^2(e)}.
\end{aligned}
\end{equation*}
\end{lem}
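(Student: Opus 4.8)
The plan is to pass from the quantity $\bl v_h\cdot\bl\n_h$, which is not directly controlled by $\|\bl v_h\|_{0,h}$, to the quantity $T^k\bl v_h\cdot\wt{\bl\n}$, which is. Recalling from (\ref{T1}) that $T^k\bl v_h=\bl v_h+T_1^k\bl v_h$ and that $\wt{\bl\n}$ is a unit vector, I would first write, on each $e\in\E_h^b$, the pointwise identity
\begin{equation*}
\bl v_h\cdot\bl\n_h=T^k\bl v_h\cdot\wt{\bl\n}-T_1^k\bl v_h\cdot\wt{\bl\n}+\bl v_h\cdot(\bl\n_h-\wt{\bl\n}),
\end{equation*}
and then split $\sum_{e\in\E_h^b}\lal\bl v_h\cdot\bl\n_h,\phi\ral_e$ into the three corresponding sums, estimating each by the Cauchy--Schwarz inequality on $e$ followed by a discrete Cauchy--Schwarz over $\E_h^b$. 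Throughout I would use the quasi-uniformity $h_K\sim h$ to interchange $h_K$ and $h$ freely.

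For the first piece I would insert the factors $h_K^{-1/2}$ and $h_K^{1/2}$ to obtain
\begin{equation*}
\sum_{e\in\E_h^b}\lal T^k\bl v_h\cdot\wt{\bl\n},\phi\ral_e\le\Big(\sum_{e\in\E_h^b}\|h_K^{-1/2}T^k\bl v_h\cdot\wt{\bl\n}\|_{0,e}^2\Big)^{1/2}\Big(\sum_{e\in\E_h^b}h_K\|\phi\|_{0,e}^2\Big)^{1/2},
\end{equation*}
where the first factor is at most $\|\bl v_h\|_{0,h}$ by the definition of the mesh-dependent norm, and the second is $\lesssim(\sum_{e\in\E_h^b}h\|\phi\|_{0,e}^2)^{1/2}$. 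For the second piece, since $|\wt{\bl\n}|=1$ gives $|T_1^k\bl v_h\cdot\wt{\bl\n}|\le|T_1^k\bl v_h|$, the same factoring together with the bound (\ref{T1_bound}) of Lemma \ref{lem:T1_bounded} controls $\sum_{e\in\E_h^b}\|h_K^{-1/2}T_1^k\bl v_h\|_{0,e}^2$ by $\|\bl v_h\|_{0,\Omega_h}^2\le\|\bl v_h\|_{0,h}^2$. Both pieces therefore produce the claimed right-hand side.

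The term I expect to require the most care is the geometric one, $\sum_{e\in\E_h^b}\lal\bl v_h\cdot(\bl\n_h-\wt{\bl\n}),\phi\ral_e$, because $\|\bl v_h\|_{0,h}$ offers no edgewise control of $\bl v_h$ itself. Here I would use the normal-discrepancy estimate $\|\wt{\bl\n}-\bl\n_h\|_{L^\infty(\Gamma_h)}\lesssim h$ from (\ref{deta}) to get $\|\bl v_h\cdot(\bl\n_h-\wt{\bl\n})\|_{0,e}\lesssim h\|\bl v_h\|_{0,e}$, and then recover the correct scaling with the trace inequality (Lemma \ref{lem:TRACE}) followed by the inverse inequality (Lemma \ref{lem:inverse}), which yield $\|\bl v_h\|_{0,e}\lesssim h_K^{-1/2}\|\bl v_h\|_{0,K}$. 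After a discrete Cauchy--Schwarz this leaves a factor $(\sum_{e\in\E_h^b}\|\bl v_h\|_{0,K}^2)^{1/2}$; since each boundary element carries only a uniformly bounded number of boundary edges, this is controlled by $\|\bl v_h\|_{0,\Omega_h}\le\|\bl v_h\|_{0,h}$. Collecting the three estimates gives the lemma. The delicate point in this last step is that the apparent $O(h)$ gain from the normal discrepancy is exactly consumed by the $h^{-1/2}$ loss from the trace and inverse inequalities and the $h^{1/2}$ weight carried on $\phi$, so the bookkeeping of the powers of $h$ must be tracked carefully.
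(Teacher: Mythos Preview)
Your proposal is correct and follows essentially the same approach as the paper: the same pointwise decomposition $\bl v_h\cdot\bl\n_h=T^k\bl v_h\cdot\wt{\bl\n}-T_1^k\bl v_h\cdot\wt{\bl\n}+\bl v_h\cdot(\bl\n_h-\wt{\bl\n})$, with the three pieces handled respectively by the definition of $\|\cdot\|_{0,h}$, Lemma~\ref{lem:T1_bounded}, and the normal-discrepancy bound from (\ref{deta}) combined with the trace and inverse inequalities. Your exposition of the $h$-bookkeeping in the third term is in fact more explicit than the paper's.
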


\begin{proof}
By the definition in (\ref{T1}), notice that $\bl v_h=T^k\bl v_h-T_1^k\bl v_h$, one gets
\begin{equation*}
\begin{aligned}
\sum_{e\in\E_h^b} \lal\bl v_h\cdot\bl\n_h,\phi\ral_e=&\sum_{e\in\E_h^b} (\lal\bl v_h\cdot\wt{\bl\n},\phi\ral_e+\lal\bl v_h\cdot(\bl\n_h-\wt{\bl\n}),\phi\ral_e)\\
=&\sum_{e\in\E_h^b} (\lal T^k\bl v_h\cdot\wt{\bl\n},\phi\ral_e-\lal T_1^k\bl v_h\cdot\wt{\bl\n},\phi\ral_e)+\sum_{e\in\E_h^b}\lal\bl v_h\cdot(\bl\n_h-\wt{\bl\n}),\phi\ral_e.
\end{aligned}
\end{equation*}
By the Schwarz inequality and (\ref{deta}), one has
\begin{equation*}
\begin{aligned}
\sum_{e\in\E_h^b} \lal\bl v_h\cdot\bl\n_h,\phi\ral_e \leq &\sum_{e\in\E_h^b} (h_K^{-1/2}\| T^k\bl v_h\cdot\wt{\bl\n}\|_{0,e}
+h_K^{-1/2}\|T_1^k\bl v_h\cdot\wt{\bl\n}\|_{0,e}+h_K^{1/2}\|\bl v_h\|_{0,e})h_K^{1/2}\|\phi\|_{0,e}.\\
\end{aligned}
\end{equation*}
Next, according to Lemma \ref{lem:T1_bounded} and the inverse inequality, it is no hard to obtain
\begin{equation*}
\begin{aligned}
\sum_{e\in\E_h^b} \lal\bl v_h\cdot\bl\n_h,\phi\ral_e &\lesssim \left(\sum_{e\in\E_h^b} \|h_K^{-1/2} T^k\bl v_h\cdot\wt{\bl\n}\|_{0,e}^2+\|\bl v_h\|_{0,\Omega_h}^2\right)^{1/2}
\left(\sum_{e\in\E_h^b} h\|\phi\|_{0,e}^2\right)^{1/2}\\
&\lesssim \left(\sum_{e\in\E_h^b} h\|\phi\|_{0,e}^2\right)^{1/2} \|\bl v_h\|_{0,h}.
\end{aligned}
\end{equation*}
\end{proof}

\begin{cro}\label{cro:vn_ph}
  For any $\bl v_h\in V_h$, we have
\begin{equation*}
\begin{aligned}
\sum_{e\in\E_h^b}\lal\bl v_h\cdot\bl\n_h,q_h\ral_e\lesssim \|\bl v_h\|_{0,h}\|q_h\|_{1,h},\qquad \forall q_h\in Q_{0h}.
\end{aligned}
\end{equation*}
\end{cro}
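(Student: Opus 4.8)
The plan is to combine Lemma \ref{lem:vn_phi} with a trace estimate and the Poincar\'e--Friedrichs inequality. Since $q_h|_{\E_h^b}$ lies in $\underset{e\in\E_h^b}{\Pi}L^2(e)$, applying Lemma \ref{lem:vn_phi} with $\phi=q_h$ immediately gives
\begin{equation*}
\sum_{e\in\E_h^b}\lal\bl v_h\cdot\bl\n_h,q_h\ral_e
\lesssim \Big(\sum_{e\in\E_h^b} h\|q_h\|_{0,e}^2\Big)^{1/2}\|\bl v_h\|_{0,h},
\end{equation*}
so the whole task reduces to proving the trace-type bound $\sum_{e\in\E_h^b} h\|q_h\|_{0,e}^2\lesssim \|q_h\|_{1,h}^2$ for $q_h\in Q_{0h}$.

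To estimate the right-hand side, I would apply the trace inequality of Lemma \ref{lem:TRACE} on each boundary element $K\in\T_h^b$: for an edge $e\subset\pt K\cap\Gamma_h$ this yields $\|q_h\|_{0,e}^2\lesssim h_K^{-1}\|q_h\|_{0,K}^2+h_K|q_h|_{1,K}^2$. Multiplying by $h$ and using quasi-uniformity ($h_K\sim h$) converts this to $h\|q_h\|_{0,e}^2\lesssim \|q_h\|_{0,K}^2+h^2|q_h|_{1,K}^2$. Summing over boundary edges then gives
\begin{equation*}
\sum_{e\in\E_h^b} h\|q_h\|_{0,e}^2\lesssim \sum_{K\in\T_h^b}\big(\|q_h\|_{0,K}^2+h^2|q_h|_{1,K}^2\big)
\lesssim \|q_h\|_{0,\Omega_h}^2+h^2\!\sum_{K\in\T_h}|q_h|_{1,K}^2.
\end{equation*}
The gradient term is harmless, since $h^2\sum_K|q_h|_{1,K}^2\lesssim h^2\|q_h\|_{1,h}^2\lesssim \|q_h\|_{1,h}^2$; the real content is controlling the full $L^2$-mass $\|q_h\|_{0,\Omega_h}^2$, which is \emph{not} a piece of $\|q_h\|_{1,h}$.

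This is exactly where the constraint $q_h\in Q_{0h}$ (i.e. $\int_{\Omega_h}q_h\dif x=0$) enters, and it is the main obstacle: without the mean-value-free property, the zeroth-order part of $q_h$ cannot be bounded by gradients and jumps alone. I would invoke the Poincar\'e--Friedrichs inequality of Lemma \ref{lem:Poincare_inequality}, whose last term $(\int_{\Omega_h}q_h\dif x)^2$ vanishes for $q_h\in Q_{0h}$. It then remains to reconcile the jump terms: Lemma \ref{lem:Poincare_inequality} features $h^{-2}(\int_e[q_h]\dif s)^2$, whereas $\|q_h\|_{1,h}$ uses $h^{-1}\|[q_h]\|_{0,e}^2$. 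A single Cauchy--Schwarz step, $(\int_e[q_h]\dif s)^2\leq |e|\,\|[q_h]\|_{0,e}^2\lesssim h\|[q_h]\|_{0,e}^2$, upgrades the former into the latter, so that $\|q_h\|_{0,\Omega_h}^2\lesssim \sum_{K\in\T_h}|q_h|_{1,K}^2+\sum_{e\in\E_h^o}h^{-1}\|[q_h]\|_{0,e}^2=\|q_h\|_{1,h}^2$. Chaining these estimates delivers $\big(\sum_{e\in\E_h^b}h\|q_h\|_{0,e}^2\big)^{1/2}\lesssim \|q_h\|_{1,h}$ and hence the claimed bound.
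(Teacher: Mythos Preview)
Your proof is correct and follows essentially the same route as the paper: apply Lemma \ref{lem:vn_phi}, bound $\sum_{e\in\E_h^b} h\|q_h\|_{0,e}^2$ by $\|q_h\|_{0,\Omega_h}^2$ via the trace inequality, and then invoke the Poincar\'e--Friedrichs inequality of Lemma \ref{lem:Poincare_inequality} together with $\int_{\Omega_h}q_h\dif x=0$ to pass to $\|q_h\|_{1,h}^2$. The only cosmetic difference is that the paper additionally uses the inverse inequality to absorb the $h_K|q_h|_{1,K}^2$ trace term directly into $\|q_h\|_{0,K}^2$, whereas you keep it and bound it by $\|q_h\|_{1,h}^2$; your proposal is also more explicit about the Cauchy--Schwarz step reconciling the jump term in Lemma \ref{lem:Poincare_inequality} with the one in $\|\cdot\|_{1,h}$.
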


\begin{proof}
Replacing $\phi$ of Lemma \ref{lem:vn_phi} by $q_h$, we have
\begin{equation*}
\begin{aligned}
\sum_{e\in\E_h^b}\lal\bl v_h\cdot\bl\n_h,q_h\ral_e\lesssim \left(\sum_{e\in\E_h^b} h\|q_h\|_{0,e}^2\right)^{1/2}\|\bl v_h\|_{0,h}.
\end{aligned}
\end{equation*}
By the trace and inverse inequalities in Lemmas \ref{lem:TRACE} and \ref{lem:inverse}, we obtain
\begin{equation}
\begin{aligned}\label{J2}
\sum_{e\in\E_h^b}\lal\bl v_h\cdot\bl\n_h,q_h\ral_e \lesssim \|\bl v_h\|_{0,h}\|q_h\|_{0,\Omega_h}.
\end{aligned}
\end{equation}
Notice that $q_h\in Q_{0h}$, which implies $\int_{\Omega_h} q_h\dif x=0$.
Thus, by the Poncar\'{e}-Friedrichs inequality in Lemma \ref{lem:Poincare_inequality}, yielding
\begin{equation}
\begin{aligned}\label{poincare_Fridrichs}
\|q_h\|_{0,\Omega_h}^2\lesssim \|q_h\|_{1,h}^2.
\end{aligned}
\end{equation}
Substituting (\ref{poincare_Fridrichs}) into (\ref{J2}), we obtain the desired result.
\end{proof}

We are now in the position to state the main results of this subsection:
\begin{lem}\label{lem:bounded}
For any $\bl u_h,\bl v_h\in V_h,\, q_h\in Q_{0h}$, we have
\begin{align}
|a_h(\bl u_h,\bl v_h)|&\leq \|\bl u_h\|_{0,h} \|\bl v_h\|_{0,h},&\qquad |a_h(\bl v_h,\bl v_h)|&{=} \|\bl v_h\|_{0,h}^2,\label{ah_bound} \\ 
|b_{h1}(\bl v_h,q_h)|& \lesssim \|\bl v_h\|_{0,h} \|q_h\|_{1,h},&\qquad |b_{h0}(\bl v_h,q_h)|&\lesssim\|\bl v_h\|_{0,h} \|q_h\|_{1,h}.\label{bh_bound}
\end{align}
\end{lem}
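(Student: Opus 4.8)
The plan is to treat the two bounds for $a_h$ as essentially definitional and to reserve the real work for $b_{h1}$ and $b_{h0}$, where element-wise integration by parts is the engine. For the first line of the claim I would argue directly from the definition of $a_h$ in (\ref{bilinear forms}). Writing $h_K^{-1}=h_K^{-1/2}\cdot h_K^{-1/2}$ in the boundary sum and applying the Cauchy--Schwarz inequality first on each edge $e$ and then in the discrete $\ell^2$ sense over $e\in\E_h^b$, the two resulting factors reassemble exactly into the two pieces of the mesh-dependent norm $\|\cdot\|_{0,h}$; this gives $|a_h(\bl u_h,\bl v_h)|\leq\|\bl u_h\|_{0,h}\|\bl v_h\|_{0,h}$ with constant one. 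Setting $\bl u_h=\bl v_h$ turns every inequality into an equality, so $a_h(\bl v_h,\bl v_h)=\|\bl v_h\|_{0,h}^2$ holds by inspection, and no auxiliary lemma is needed here.

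For $b_{h1}$ and $b_{h0}$ the key first step is an integration-by-parts identity on each $K\in\T_h$. Since $\bl v_h\in V_h\subset H(\mathrm{div},\Omega_h)$, the normal trace $\bl v_h\cdot\bl\n$ is single-valued across each interior edge while $q_h$ may jump; collecting the element-boundary contributions therefore produces
\[
(\mathrm{div}\,\bl v_h,q_h)_{\Omega_h}=-\sum_{K\in\T_h}(\bl v_h,\nabla q_h)_K+\sum_{e\in\E_h^o}\lal\bl v_h\cdot\bl\n_e,[q_h]\ral_e+\sum_{e\in\E_h^b}\lal\bl v_h\cdot\bl\n_h,q_h\ral_e.
\]
Substituting this into the definition of $b_{h1}$, the boundary sum over $\E_h^b$ produced by integration by parts cancels exactly against the $+\sum_{e\in\E_h^b}\lal\bl v_h\cdot\bl\n_h,q_h\ral_e$ term in $b_{h1}$, leaving only the volume term $\sum_K(\bl v_h,\nabla q_h)_K$ and the interior-jump term. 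The first is bounded by $\|\bl v_h\|_{0,\Omega_h}(\sum_K\|\nabla q_h\|_{0,K}^2)^{1/2}$ via Cauchy--Schwarz, which is at most $\|\bl v_h\|_{0,h}\|q_h\|_{1,h}$. For the interior-jump term I would split $h^{-1}$ as $h^{1/2}\cdot h^{-1/2}$, bound $h^{1/2}\|\bl v_h\cdot\bl\n_e\|_{0,e}\lesssim\|\bl v_h\|_{0,K}$ using the trace inequality (Lemma \ref{lem:TRACE}) followed by the inverse inequality (Lemma \ref{lem:inverse}), and then apply the discrete Cauchy--Schwarz inequality so that the factor $h^{-1/2}\|[q_h]\|_{0,e}$ reassembles into the jump part of $\|q_h\|_{1,h}$. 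This yields the bound for $b_{h1}$.

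The bound for $b_{h0}$ differs only in that no cancellation occurs: substituting the same identity into $b_{h0}(\bl v_h,q_h)=-(\mathrm{div}\,\bl v_h,q_h)_{\Omega_h}$ leaves the extra term $\sum_{e\in\E_h^b}\lal\bl v_h\cdot\bl\n_h,q_h\ral_e$. This is precisely the quantity controlled by Corollary \ref{cro:vn_ph}, which bounds it by $\|\bl v_h\|_{0,h}\|q_h\|_{1,h}$, so combining it with the two terms already estimated gives the stated bound for $b_{h0}$.

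The main obstacle I anticipate is not any single hard estimate but the bookkeeping in the integration-by-parts identity: one must orient the jumps consistently with the paper's convention $[q_h]=q_h|_{K_1}-q_h|_{K_2}$ and track the boundary-edge contributions carefully, since the cleanness of the $b_{h1}$ estimate hinges entirely on their cancellation. The genuinely nontrivial analytic input is hidden upstream in Corollary \ref{cro:vn_ph}, whose proof already absorbed the Taylor-expansion structure of $T^k$ and the Poincar\'e--Friedrichs inequality; once that is available, the present lemma reduces to combining Cauchy--Schwarz with the trace and inverse inequalities.
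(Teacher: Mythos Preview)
Your proposal is correct and follows essentially the same route as the paper: the $a_h$ bounds are read off from the definition of $\|\cdot\|_{0,h}$, while $b_{h1}$ and $b_{h0}$ are handled by element-wise integration by parts, with the boundary-edge sum cancelling in $b_{h1}$ and surviving in $b_{h0}$, where it is absorbed via Corollary~\ref{cro:vn_ph}. The trace/inverse treatment of the interior-jump term is identical to the paper's (your ``split $h^{-1}$'' is really the insertion of $1=h^{1/2}\cdot h^{-1/2}$, exactly as in the paper).
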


\begin{proof}
From the definition of $\|\cdot\|_{0,h}$, it is evident to complete the proof of (\ref{ah_bound}).
Then, we only need to prove (\ref{bh_bound}).
By integration by parts and the Schwarz inequality, we obtain
\begin{equation*}
\begin{aligned}
b_{h1}(\bl v_h,q_h)&=-\sum_{K\in\T_h}(\mathrm{div}\, \bl v_h,q_h)_K+\sum_{e\in\E_h^b} \lal\bl v_h\cdot\bl\n_h,q_h\ral_e\\
&=\sum_{K\in\T_h}(\bl v_h,\nabla q_h)_K-\sum_{e\in\E_h^o} \lal\bl v_h\cdot\bl\n_h,[q_h]\ral_e\\
&\leq \sum_{K\in\T_h}\|\bl v_h\|_{0,K}\|\nabla q_h\|_{0,K}+\sum_{e\in\E_h^o} h^{1/2}\|\bl v_h\cdot\bl\n_h\|_{0,e}h^{-1/2}\|[q_h]\|_{0,e},
\end{aligned}
\end{equation*}
using the trace and inverse inequalities in Lemmas \ref{lem:TRACE}, \ref{lem:inverse}, we get
\begin{equation}
\begin{aligned}\label{inverse_inequality}
h^{1/2}\|\bl v_h\cdot\bl\n_h\|_{0,e}\leq h^{1/2}\|\bl v_h\|_{0,e}\lesssim \|\bl v_h\|_{0,K_1\cup K_2}, \quad e\in\E_h^o,\, e=K_1\cap K_2.
\end{aligned}
\end{equation}
Then
\begin{equation*}
\begin{aligned}
|b_{h1}(\bl v_h,q_h)|& \lesssim \|\bl v_h\|_{0,h} \|q_h\|_{1,h}.
\end{aligned}
\end{equation*}
Similarly, by Corollary \ref{cro:vn_ph}, the bound of $b_{h0}(\bl v_h,q_h)$ can be estimated as 
\begin{equation}
\begin{aligned}\label{b0}
|b_{h0}(\bl v_h,q_h)|&=|\sum_{K\in\T_h}(\bl v_h,\nabla q_h)_K-\sum_{e\in\E_h^o} \lal\bl v_h\cdot\bl\n_h,[q_h]\ral_e
-\sum_{e\in\E_h^b} \lal\bl v_h\cdot\bl\n_h,q_h\ral_e|\\
&\lesssim \|\bl v_h\|_{0,h}\|q_h\|_{1,h}+
|\sum_{e\in\E_h^b} \lal\bl v_h\cdot\bl\n_h,q_h\ral_e|\\
&\lesssim \|\bl v_h\|_{0,h}\|q_h\|_{1,h}.
\end{aligned}
\end{equation}

Thus, we complete the proof of this lemma.
\end{proof}

\begin{lem}\label{lem:LBB}
  (Inf-Sup). For all $q_h\in Q_{0h}$, it holds
\begin{equation*}
\begin{aligned}\label{LBB12}
\sup_{\bl v_h\in V_h}\frac{b_{h1}(\bl v_h,q_h)}{\|\bl v_h\|_{0,h}}\gtrsim  \|q_h\|_{1,h},\qquad
\sup_{\bl v_h\in V_h}\frac{b_{h0}(\bl v_h,q_h)}{\|\bl v_h\|_{0,h}}\gtrsim  \|q_h\|_{1,h}.
\end{aligned}
\end{equation*}
\end{lem}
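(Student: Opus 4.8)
The plan is to verify both inf-sup conditions at once by explicitly constructing a single test function $\bl v_h\in V_h$, tailored to a given $q_h\in Q_{0h}$, that simultaneously realizes the lower bound for $b_{h1}$ and $b_{h0}$. I would build $\bl v_h$ through the Raviart-Thomas degrees of freedom in (\ref{Pi_k}): on every $K\in\T_h$ prescribe the interior moments by $(\bl v_h,\bl\psi_{k-1})_K=(\nabla q_h,\bl\psi_{k-1})_K$ for all $\bl\psi_{k-1}\in\bl P_{k-1}(K)$; on every interior edge $e\in\E_h^o$ set $\lal\bl v_h\cdot\bl\n_h,\phi_k\ral_e=-h^{-1}\lal[q_h],\phi_k\ral_e$ for all $\phi_k\in P_k(e)$ (with the jump oriented consistently with $\bl\n_h$); and on every boundary edge $e\in\E_h^b$ impose $\bl v_h\cdot\bl\n_h=0$. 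Since $\nabla q_h\in\bl P_{k-1}(K)$, the interior prescription forces the $L^2(K)$-projection of $\bl v_h$ onto $\bl P_{k-1}(K)$ to equal $\nabla q_h$; since $[q_h]\in P_k(e)$, the edge prescription forces $\bl v_h\cdot\bl\n_h=-h^{-1}[q_h]$ on each interior edge. The normal trace being single-valued, $\bl v_h$ is a well-defined element of $V_h$.

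With this choice the lower bound is immediate. Using the integration-by-parts identity already recorded in the proof of Lemma \ref{lem:bounded}, $b_{h1}(\bl v_h,q_h)=\sum_{K}(\bl v_h,\nabla q_h)_K-\sum_{e\in\E_h^o}\lal\bl v_h\cdot\bl\n_h,[q_h]\ral_e$, the interior moments give $\sum_K(\bl v_h,\nabla q_h)_K=\sum_K\|\nabla q_h\|_{0,K}^2$ and the edge moments give $-\lal\bl v_h\cdot\bl\n_h,[q_h]\ral_e=h^{-1}\|[q_h]\|_{0,e}^2$, so that $b_{h1}(\bl v_h,q_h)=\|q_h\|_{1,h}^2$. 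For $b_{h0}$ the same manipulation produces the additional term $-\sum_{e\in\E_h^b}\lal\bl v_h\cdot\bl\n_h,q_h\ral_e$, which vanishes because $\bl v_h\cdot\bl\n_h=0$ on $\E_h^b$; hence $b_{h0}(\bl v_h,q_h)=\|q_h\|_{1,h}^2$ as well, and a single $\bl v_h$ serves both estimates.

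It remains to show $\|\bl v_h\|_{0,h}\lesssim\|q_h\|_{1,h}$, which is the technical core. First I would bound $\|\bl v_h\|_{0,\Omega_h}$ by the standard element-wise scaling estimate for the $RT_k$ element,
\[
\|\bl v_h\|_{0,K}^2\lesssim h_K\sum_{e\subset\pt K}\|\bl v_h\cdot\bl\n_h\|_{0,e}^2+\|\Pi_{k-1}\bl v_h\|_{0,K}^2,
\]
obtained from a Piola-transform argument and the equivalence of norms on $RT_k(\hat K)$, where $\Pi_{k-1}$ denotes the $L^2(K)$-projection onto $\bl P_{k-1}(K)$. Substituting $\bl v_h\cdot\bl\n_h=-h^{-1}[q_h]$ on interior edges, $\bl v_h\cdot\bl\n_h=0$ on boundary edges, $\Pi_{k-1}\bl v_h=\nabla q_h$, and using $h_K\lesssim h$ from quasi-uniformity, summation over $K$ yields $\|\bl v_h\|_{0,\Omega_h}^2\lesssim\|q_h\|_{1,h}^2$.

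The remaining, genuinely nonstandard piece is the surrogate-boundary term in $\|\bl v_h\|_{0,h}$, and I expect it to be the main obstacle. Writing $T^k\bl v_h=\bl v_h+T_1^k\bl v_h$ and $\bl v_h\cdot\wt{\bl\n}=\bl v_h\cdot\bl\n_h+\bl v_h\cdot(\wt{\bl\n}-\bl\n_h)$, the term $\bl v_h\cdot\bl\n_h$ vanishes on $\E_h^b$ by construction; the contribution of $\bl v_h\cdot(\wt{\bl\n}-\bl\n_h)$ is controlled by $\|\wt{\bl\n}-\bl\n_h\|_{L^\infty(\Gamma_h)}\lesssim h$ from (\ref{deta}) together with the trace and inverse inequalities and $h\lesssim h_K$, while the contribution of $T_1^k\bl v_h$ is absorbed by Lemma \ref{lem:T1_bounded}. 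Both are dominated by $\|\bl v_h\|_{0,\Omega_h}$, so $\sum_{e\in\E_h^b}\|h_K^{-1/2}T^k\bl v_h\cdot\wt{\bl\n}\|_{0,e}^2\lesssim\|\bl v_h\|_{0,\Omega_h}^2$ and therefore $\|\bl v_h\|_{0,h}\lesssim\|\bl v_h\|_{0,\Omega_h}\lesssim\|q_h\|_{1,h}$. Combining with the lower bounds gives $b_{h1}(\bl v_h,q_h)/\|\bl v_h\|_{0,h}\gtrsim\|q_h\|_{1,h}$ and the identical estimate for $b_{h0}$, completing both inf-sup conditions. The crux is precisely ensuring that this extra surrogate-boundary contribution does not degrade the scaling, which is what forces the choice $\bl v_h\cdot\bl\n_h=0$ on $\E_h^b$ and the use of the geometric estimate (\ref{deta}) and quasi-uniformity.
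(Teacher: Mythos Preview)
Your proposal is correct and follows essentially the same route as the paper: the identical $RT_k$ construction of $\bl v_h$ via the degrees of freedom (zero normal trace on $\E_h^b$, $-h^{-1}[q_h]$ on $\E_h^o$, interior moments matched to $\nabla q_h$), the identity $b_{h1}(\bl v_h,q_h)=b_{h0}(\bl v_h,q_h)=\|q_h\|_{1,h}^2$, and the control of the surrogate-boundary term by splitting off $\bl v_h\cdot\bl\n_h=0$, invoking (\ref{deta}) for the $(\wt{\bl\n}-\bl\n_h)$ piece and Lemma \ref{lem:T1_bounded} for the $T_1^k$ piece. The only cosmetic differences are that the paper cites \cite{Puppi_mixed} for $\|\bl v_h\|_{0,\Omega_h}\lesssim\|q_h\|_{1,h}$ where you supply the elementwise scaling argument, and the paper splits $T^k\bl v_h\cdot\wt{\bl\n}$ as $T^k\bl v_h\cdot\bl\n_h+T^k\bl v_h\cdot(\wt{\bl\n}-\bl\n_h)$ rather than your equivalent $(\bl v_h+T_1^k\bl v_h)\cdot\wt{\bl\n}$.
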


\begin{proof}
For an arbitrary $q_h\in Q_{0h}$, we construct $\bl v_h$
  using the degrees of freedom of the Raviart-Thomas space as follows:
\begin{align}
\lal\bl v_h\cdot\bl\n_h,\phi_k\ral_e&=-h^{-1}\lal[q_h],\phi_k\ral_e, \quad &\forall &\phi_k\in P_k(e),\forall \,e\in\E_h^o,\\
\lal\bl v_h\cdot\bl\n_h,\phi_k\ral_e&=0, \quad &\forall &\phi_k\in P_k(e),\forall e\in\E_h^b,\label{construct_Eb}\\
(\bl v_h,\bl\psi_{k-1})_K&=(\nabla q_h,\bl\psi_{k-1})_K, \quad &\forall &\bl\psi_{k-1}\in \bl P_{k-1}(K),
\forall \,K\in\T_h.
\end{align}
From reference \cite{Puppi_mixed}, it is straightforward to obtain
\begin{equation}
\begin{aligned}\label{b1_bound}
b_{h1}(\bl v_h,q_h)={\|q_h\|_{1,h}^2}, \qquad \|\bl v_h\|_{0,\Omega_h}\lesssim \|q_h\|_{1,h}.
\end{aligned}
\end{equation}
Recalling the definition of $\|\cdot\|_{0,h}$, it suffices to prove the inequality $$\sum_{e\in\E_h^b} \|h_K^{-1/2} T^k\bl v_h\cdot\wt{\bl\n}\|_{0,e}\lesssim \|q_h\|_{1,h}.$$
Indeed, for each $e\in\E_h^b$ and $\bl v_h\in V_h$ implies $\bl v_h\cdot\bl\n_h|_e\in P_k(e)$.
Then replacing $\phi_k$ of (\ref{construct_Eb}) by $\bl v_h\cdot\bl\n_h$, we have
\begin{equation}
\begin{aligned}\label{v*n=0}
\bl v_h\cdot\bl\n_h|_e=0,\qquad \forall e\in \E_h^b.
\end{aligned}
\end{equation}
{Furthermore, by {the triangle inequality}, it has}
\begin{equation}
\begin{aligned}\label{E}
\sum_{e\in\E_h^b} \|h_K^{-1/2} &T^k\bl v_h\cdot\wt{\bl\n}\|_{0,e}^2\lesssim\sum_{e\in\E_h^b} h_K^{-1}(\| T^k\bl v_h\cdot\bl\n_h\|_{0,e}^2+\sum_{e\in\E_h^b} h_K^{-1}\| T^k\bl v_h\cdot\wt{\bl\n}-\bl\n_h\|_{0,e}^2)\\
&\lesssim \sum_{e\in\E_h^b} h_K^{-1}(\| T_1^k\bl v_h\cdot\bl\n_h\|_{0,e}^2+\| T^k\bl v_h\cdot(\wt{\bl\n}-\bl\n_h)\|_{0,e}^2).
\end{aligned}
\end{equation}
Applying Lemma \ref{lem:T1_bounded} and  (\ref{deta}), we obtain
\begin{equation}
\begin{aligned}\label{E_bound0}
\sum_{e\in\E_h^b} h_K^{-1}\| T_1^k\bl v_h\cdot\bl\n_h\|_{0,e}^2&\lesssim \|\bl v_h\|_{0,\Omega_h}^2,
\end{aligned}
\end{equation}
and
\begin{equation}
\begin{aligned}\label{E_bound}
\sum_{e\in\E_h^b} h_K^{-1}\| T^k\bl v_h\cdot(\wt{\bl\n}-\bl\n_h)\|_{0,e}^2&\lesssim h^{-2}\|\bl v_h\|_{0,\Omega_h}^2\|\wt{\bl\n}-\bl\n_h\|_{L^\infty(\Gamma_h)}^2\\
&\lesssim \|\bl v_h\|_{0,\Omega_h}^2.
\end{aligned}
\end{equation}
Finally, combining (\ref{b1_bound})-(\ref{E_bound}), we establish the first inequality of this lemma.

{Similarly} to the estimate of $b_{h1}(\bl v_h,q_h)$, by integration by parts and (\ref{v*n=0}), one gets
\begin{equation*}
\begin{aligned}
b_{h0}(\bl v_h,q_h)&=\sum_{K\in\T_h}(\bl v_h,\nabla q_h)_K-\sum_{e\in\E_h^o}\lal\bl v_h\cdot\bl\n_h,[q_h]\ral_e\\
&=\sum_{K\in\T_h}\|\nabla q_h\|_{0,K}^2+\sum_{e\in\E_h^o}h^{-1}\|[q_h]\|_{0,e}^2=\|q_h\|_{1,h}^2.
\end{aligned}
\end{equation*}
Thus, we complete the proof of this lemma.
\end{proof}

According to  the Brezzi theorem, the discrete problem (\ref{Wh1}) admits a unique solution.

\section{Error analysis}\label{sec5:error}
{In this section, we} will estimate the errors in mesh-dependent norm and $L^2$-norm.
The stability estimates of $a_h(\cdot,\cdot), b_{h0}(\cdot,\cdot)$ and $b_{h1}(\cdot,\cdot)$ imply
\begin{equation}
\begin{aligned}\label{B_h_bound}
\|\bl\sigma_h,\zeta_h\|_H\lesssim \sup_{(\bl v_h, q_h)\in V_h\times Q_{0h}}\frac{B_h((\bl\sigma_h,\zeta_h),(\bl v_h, q_h))}{\|\bl v_h, q_h\|_H},
\qquad \forall\,(\bl\sigma_h,\zeta_h)\in V_h\times Q_{0h},
\end{aligned}
\end{equation}
where
\begin{equation*}
\begin{aligned}
B_h((\bl\sigma_h,\zeta_h),(\bl v_h, q_h))&:=a_h(\bl\sigma_h,\bl v_h)+b_{h1}(\bl v_h,\zeta_h)+b_{h0}(\bl\sigma_h,q_h),\\
\|\bl\sigma_h,\zeta_h\|_H&:=(\|\bl\sigma_h\|_{0,h}^2+\|\zeta_h\|_{1,h}^2)^{1/2}.
\end{aligned}
\end{equation*}

To obtain the error estimates, we need the following lemma.
\begin{lem}\label{lem:app_Th}
Assume that $\bl w\in W^{k+1,\infty}(\Omega)\cap\bl H^{r+1}(\Omega)$, $\bl w^E$ is the extended function of $\bl w$, for any $\bl v_h\in V_h$, the interpolation operator $\bl w_I$ of $\bl w^E$ satisfies
\begin{equation*}
\begin{aligned}
\sum_{e\in\E_h^b}\lal h_K^{-1} T^k (\bl w^E-\bl w_I)\cdot\wt{\bl\n},T^k\bl v_h\cdot\wt{\bl\n}\ral_e
\lesssim \left(h^{k+1/2}\|\bl w^E\|_{k+1,\infty,\Omega_h}+h^{r+1}|\bl w^E|_{r+1,\Omega_h}\right)\|\bl v_h\|_{0,h}.
\end{aligned}
\end{equation*}
\end{lem}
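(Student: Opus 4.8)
The plan is to start from the Cauchy--Schwarz inequality applied edge by edge, which bounds the left-hand side by
\begin{equation*}
\left(\sum_{e\in\E_h^b}\|h_K^{-1/2}T^k(\bl w^E-\bl w_I)\cdot\wt{\bl\n}\|_{0,e}^2\right)^{1/2}
\left(\sum_{e\in\E_h^b}\|h_K^{-1/2}T^k\bl v_h\cdot\wt{\bl\n}\|_{0,e}^2\right)^{1/2}.
\end{equation*}
The second factor is at most $\|\bl v_h\|_{0,h}$ by the definition of the mesh-dependent norm, so the whole estimate reduces to proving
\begin{equation*}
\sum_{e\in\E_h^b}h_K^{-1}\|T^k\bl\eta\cdot\wt{\bl\n}\|_{0,e}^2\lesssim h^{2k+1}\|\bl w^E\|_{k+1,\infty,\Omega_h}^2+h^{2r+2}|\bl w^E|_{r+1,\Omega_h}^2,
\end{equation*}
where I abbreviate $\bl\eta:=\bl w^E-\bl w_I$.

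The central idea is the three-way splitting obtained from $T^k=\mathrm{id}+T_1^k$ in (\ref{T1}) and $\wt{\bl\n}=\bl\n_h+(\wt{\bl\n}-\bl\n_h)$, namely
\begin{equation*}
T^k\bl\eta\cdot\wt{\bl\n}=\bl\eta\cdot\bl\n_h+T_1^k\bl\eta\cdot\bl\n_h+T^k\bl\eta\cdot(\wt{\bl\n}-\bl\n_h).
\end{equation*}
The first piece carries the sharp $O(h^{k+1/2})$ rate, while the two remaining pieces are genuine correction terms that will produce the $O(h^{r+1})$ contribution.

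For the leading piece I would exploit the Raviart--Thomas degrees of freedom (\ref{Pi_k}): since $\bl v_h\cdot\bl\n_h|_e\in P_k(e)$, the first line of (\ref{Pi_k}) shows that $\bl w_I\cdot\bl\n_h|_e$ is exactly the $L^2$-projection $\Pi_k^{0,e}(\bl w^E\cdot\bl\n_h)$, so that $\bl\eta\cdot\bl\n_h|_e$ is the projection error. Lemma \ref{lem:orthogonal projection_app} then gives $\|\bl\eta\cdot\bl\n_h\|_{0,e}\lesssim h^{k+1}|\bl w^E\cdot\bl\n_h|_{k+1,e}$, and since $\bl\n_h$ is constant on the straight edge $e$ the edge seminorm obeys $|\bl w^E\cdot\bl\n_h|_{k+1,e}^2\lesssim |e|\,\|\bl w^E\|_{k+1,\infty,\Omega_h}^2$. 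Summing and using that $\sum_{e\in\E_h^b}|e|$ is the perimeter of $\Gamma_h$, hence $O(1)$, yields the $h^{2k+1}\|\bl w^E\|_{k+1,\infty,\Omega_h}^2$ term. For the two correction pieces I would keep $\bl\eta$ intact and estimate its derivatives through the trace inequality (Lemma \ref{lem:TRACE}) combined with the interpolation bounds of Lemma \ref{lem:interpolation_app}, giving $\|\partial_{\bl\nu_h}^j\bl\eta\|_{0,e}\lesssim h^{r+1/2-j}|\bl w^E|_{r+1,K}$ for $1\le j\le r$ and $\|\bl\eta\|_{0,e}\lesssim h^{r+1/2}|\bl w^E|_{r+1,K}$. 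Inserting $\delta_h^j\lesssim h^{2j}$ into $T_1^k$ shows the $j=1$ term dominates, so $\|T_1^k\bl\eta\|_{0,e}\lesssim h^{r+3/2}|\bl w^E|_{r+1,K}$, while $\|\wt{\bl\n}-\bl\n_h\|_{L^\infty(\Gamma_h)}\lesssim h$ from (\ref{deta}) gives the same order for the third piece; multiplying by $h_K^{-1}$ and summing over $K\in\T_h^b$ produces $h^{2r+2}|\bl w^E|_{r+1,\Omega_h}^2$ (the higher indices $j\ge2$, including any $j>r$ when $r<k$, are strictly smaller thanks to the extra powers of $\delta_h$).

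The hard part will be obtaining the sharp half-order-improved rate $h^{k+1/2}$ from the leading piece. This rests on two observations: that the RT interpolant reproduces the $L^2$-projection of the normal trace, so $\bl\eta\cdot\bl\n_h$ enjoys the full $O(h^{k+1})$ edge accuracy, and that the edgewise $(k+1)$-seminorm must be measured in $W^{k+1,\infty}$ rather than $H^{k+1}$ (a Sobolev trace would cost an extra half-derivative) and then summed against the $O(1)$ perimeter. A second, easily overlooked subtlety is that in the correction terms $\bl\eta$ must be treated as a single interpolation error; splitting it as $\bl w^E-\bl w_I$ would leave a bare term $\delta_h\partial_{\bl\nu_h}\bl w^E$ of size only $O(h^{3/2})$, which would ruin the rate for $k\ge2$.
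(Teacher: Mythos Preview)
Your proposal is correct and follows essentially the same route as the paper's proof: the same three-way splitting $T^k\bl\eta\cdot\wt{\bl\n}=\bl\eta\cdot\bl\n_h+T_1^k\bl\eta\cdot\bl\n_h+T^k\bl\eta\cdot(\wt{\bl\n}-\bl\n_h)$, the identification $\bl w_I\cdot\bl\n_h|_e=\Pi_k^{0,e}(\bl w^E\cdot\bl\n_h)$ together with Lemma~\ref{lem:orthogonal projection_app} and the $W^{k+1,\infty}$ bound to extract $h^{k+1/2}$ from the leading piece, and the trace inequality plus Lemma~\ref{lem:interpolation_app} for the two correction pieces. The only cosmetic difference is that you apply Cauchy--Schwarz once at the outset and then decompose, whereas the paper decomposes inside the pairing first and applies Schwarz piece by piece; the resulting bounds are identical.
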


\begin{proof}
By applying the Schwarz inequality and (\ref{deta}), we infer that
\begin{equation*}
\begin{aligned}
   \sum_{e\in\E_h^b}&\lal h_K^{-1}T^k (\bl w^E-\bl w_I)\cdot\wt{\bl\n},T^k\bl v_h\cdot\wt{\bl\n}\ral_e\\
   =&\sum_{e\in\E_h^b}h_K^{-1}\lal T^k (\bl w^E-\bl w_I)\cdot\bl\n_h+T^k (\bl w^E-\bl w_I)\cdot(\wt{\bl\n}-\bl\n_h),T^k\bl v_h\cdot\wt{\bl\n}\ral_e\\
   \lesssim& \sum_{e\in\E_h^b}h_K^{-1}\bigg(\lal (\bl w^E-\bl w_I)\cdot\bl\n_h+T_1 (\bl w^E-\bl w_I)\cdot\bl\n_h,T^k\bl v_h\cdot\wt{\bl\n}\ral_e\\
   &\qquad+\|\wt{\bl\n}-\bl\n_h\|_{\infty,\Gamma_h}\|T^k (\bl w^E-\bl w_I)\|_{0,e}\|T^k \bl v_h\cdot\wt{\bl\n}\|_{0,e}\bigg)\\
   \lesssim &\left(\sum_{e\in\E_h^b}h^{-1}(\| (\bl w^E-\bl w_I)\cdot\bl\n_h\|_{0,e}^2+\|T_1^k (\bl w^E-\bl w_I)\|_{0,e}^2+h^2\|T^k (\bl w^E-\bl w_I)\|_{0,e}^2)\right)^{1/2}\|\bl v_h\|_{0,h}.
\end{aligned}
\end{equation*}
From the definition of interpolation, we observe that $\bl w_I\cdot\bl\n_h|_e=\Pi_k^{0,e}(\bl w\cdot\bl\n_h)$.
Then, by Lemma \ref{lem:interpolation_app}, we deduce that 
\begin{equation*}
\begin{aligned}\label{infty}
\sum_{e\in\E_h^b}h^{-1}\|(\bl w^E-\bl w_I)\cdot\bl\n_h\|_{0,e}^2&\lesssim h^{2k+1}\sum_{e\in\E_h^b}\|\bl w^E\cdot\bl\n_h\|_{k+1,e}^2\\
&\lesssim h^{2k+1}\sum_{e\in\E_h^b}|e|\|\bl w^E\|_{k+1,\infty,e}^2\\
&\lesssim h^{2k+1}\|\bl w^E\|_{k+1,\infty,\Omega_h}^2.
\end{aligned}
\end{equation*}
By the trace inequality in Lemmas \ref{lem:TRACE} and \ref{lem:interpolation_app}, we have
\begin{equation*}
\begin{aligned}\label{I_1_bounded}
\sum_{e\in\E_h^b}h^{-1}(\|T_1^k (\bl w^E-\bl w_I)\|_{0,e}^2+h^2\|T^k (\bl w^E-\bl w_I)\|_{0,e}^2)\lesssim h^{2(r+1)}|\bl w^E|_{r+1,\Omega_h}^2.
\end{aligned}
\end{equation*}
Combining the above, we complete the proof of this lemma.
\end{proof}

\begin{thm}\label{thm:energy_err}
  Let $(\bl u,p)\in \bl W^{k+1,\infty}(\Omega)\cap\bl H^{\max\{r+1,\lceil l/2\rceil \}}(\Omega)\times H^{\max\{t+1,\lceil l/2\rceil \}}(\Omega)$ be the solution of problem (\ref{primal_problem})
  and $\bl u^E\in \bl W^{k+1,\infty}(\Omega\cup\Omega_h)\cap\bl H^{\max\{r+1,\lceil l/2\rceil \}}(\Omega\cup\Omega_h),p^E\in H^{\max\{t+1,\lceil l/2\rceil \}}(\Omega\cup\Omega_h)$ are extended functions of $\bl u$ and $p$.
  Let $(\bl u_h,p_h)\in V_h\times Q_{0h}$ be the discrete solution of (\ref{Wh1}), $s:=\min\{r,t,k\}$, it holds
\begin{equation*}
\begin{aligned}
\|\bl u_h-&\bl u_I\|_{0,h}+\|p_h-p_I\|_{1,h}\lesssim~ h^{s+1}(|\bl u|_{r+1,\Omega}+|p|_{t+1,\Omega})+h^{k+1/2}\|\bl u^E\|_{k+1,\infty,\Omega_h}\\
&+\delta^{k+1}h^{-\frac{1}{2}}M_{k+1}(\bl u^E)+\delta^{l}\bigg(\|D^l(\bl u^E+\nabla p^E)\|_{0,\Omega_h\backslash\Omega}+\|D^l({\mathrm{div}}\,\bl u^E-f^E)\|_{0,\Omega_h\backslash \Omega}\bigg),
\end{aligned}
\end{equation*}
where $\bl u_I$ and $p_I$ denote the interpolation and $L^2$ projection of $\bl u^E$ and $p^E$, respectively.
\end{thm}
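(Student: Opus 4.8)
The plan is to run a Brezzi-type duality argument based on the combined stability estimate~(\ref{B_h_bound}). Write $\bl e_h:=\bl u_h-\bl u_I\in V_h$ and $\epsilon_h:=p_h-p_I$; since $\|\cdot\|_{1,h}$ is invariant under adding a constant, we may subtract the $\Omega_h$-mean of $p_I$ so that $\epsilon_h\in Q_{0h}$ without altering $\|p_h-p_I\|_{1,h}$. Then~(\ref{B_h_bound}) reduces the entire estimate to bounding $B_h((\bl e_h,\epsilon_h),(\bl v_h,q_h))$ over test pairs with $\|\bl v_h,q_h\|_H=1$, and I would split this through $(\bl u^E,p^E)$ into a consistency part $B_h((\bl u_h-\bl u^E,p_h-p^E),(\bl v_h,q_h))$ and an approximation part $B_h((\bl u^E-\bl u_I,p^E-p_I),(\bl v_h,q_h))$.

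For the consistency part I would use that $(\bl u_h,p_h)$ solves~(\ref{Wh1}) while $(\bl u^E,p^E)$ satisfies the weak identities obtained earlier by testing the extended exact solution over $\Omega_h$; these differ from the discrete equations only through the Taylor remainder $R^k\bl u^E$ and the two geometric defects $\bl u^E+\nabla p^E$ and $\mathrm{div}\,\bl u^E-f^E$. Subtracting, the data cancels and the consistency part collapses to exactly three residuals, namely $\sum_{e\in\E_h^b}\lal h_K^{-1}R^k\bl u^E\cdot\wt{\bl\n},T^k\bl v_h\cdot\wt{\bl\n}\ral_e$, $(\bl u^E+\nabla p^E,\bl v_h)_{\Omega_h}$ and $(\mathrm{div}\,\bl u^E-f^E,q_h)_{\Omega_h}$. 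The remainder I bound by Cauchy--Schwarz, using $\sum_{e}\|h_K^{-1/2}T^k\bl v_h\cdot\wt{\bl\n}\|_{0,e}^2\le\|\bl v_h\|_{0,h}^2$, the pointwise bound $|R^k\bl u^E|\lesssim\delta^{k+1}M_{k+1}(\bl u^E)$ and the fact that $\E_h^b$ contains $O(h^{-1})$ edges, which yields the $\delta^{k+1}h^{-1/2}M_{k+1}(\bl u^E)$ term.

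Both geometric defects vanish on $\Omega$, where the extensions agree, so they are supported on $\Omega_h\backslash\Omega=\cup_{e\in\E_h^b}\Omega_h^e$; since they vanish together with their derivatives up to order $l-1$ on $\Gamma$, an iterated Poincar\'e/Taylor estimate across the strip of width $\delta$ gives $\|\bl w\|_{0,\Omega_h^e}\lesssim\delta^l\|D^l\bl w\|_{0,\Omega_h^e}$, while Lemma~\ref{Omega_he2K} controls the paired factor through $(\sum_e\|\bl v_h\|_{0,\Omega_h^e}^2)^{1/2}\lesssim h^{1/2}\|\bl v_h\|_{0,\Omega_h}$ and, for the pressure, $(\sum_e\|q_h\|_{0,\Omega_h^e}^2)^{1/2}\lesssim h^{1/2}\|q_h\|_{0,\Omega_h}\lesssim h^{1/2}\|q_h\|_{1,h}$ after Lemma~\ref{lem:Poincare_inequality}; this produces the two $\delta^l(\cdots)$ contributions. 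For the approximation part, $a_h(\bl u^E-\bl u_I,\bl v_h)$ splits into its volume piece, bounded by Lemma~\ref{lem:interpolation_app} as $h^{r+1}|\bl u^E|_{r+1,\Omega_h}$, and its boundary-correction piece, which is exactly Lemma~\ref{lem:app_Th} and supplies the $h^{k+1/2}\|\bl u^E\|_{k+1,\infty,\Omega_h}$ term. Two structural identities of the $RT_k$ pair then do the rest: since $\mathrm{div}\,\bl v_h\in P_k$, the $L^2$-projection kills the volume integral in $b_{h1}(\bl v_h,p^E-p_I)$, leaving only $\sum_{e\in\E_h^b}\lal\bl v_h\cdot\bl\n_h,p^E-p_I\ral_e$, which Lemma~\ref{lem:vn_phi} with a trace estimate bounds by $h^{t+1}|p^E|_{t+1,\Omega_h}\|\bl v_h\|_{0,h}$; and by the commuting-diagram property $\mathrm{div}\,\bl u_I=\Pi_k^0\,\mathrm{div}\,\bl u^E$, the term $b_{h0}(\bl u^E-\bl u_I,q_h)=-((I-\Pi_k^0)\mathrm{div}\,\bl u^E,q_h)_{\Omega_h}$ vanishes identically.

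Collecting the bounds, dividing by $\|\bl v_h,q_h\|_H$, and absorbing $h^{r+1}$ and $h^{t+1}$ into $h^{s+1}$ with $s=\min\{r,t,k\}$, I would conclude by invoking the extension stability of Lemma~\ref{lem:extension} to pass from the $\Omega_h$-seminorms of $\bl u^E,p^E$ to the $\Omega$-seminorms of $\bl u,p$. I expect the genuine difficulty to be the two geometric-defect terms: one must rigorously convert the high-order vanishing of $\bl u^E+\nabla p^E$ and $\mathrm{div}\,\bl u^E-f^E$ on $\Gamma$ into the $\delta^l$ gain over the strip (this is where the regularity thresholds $\lceil l/2\rceil$ are consumed) while keeping the test functions controlled in $\Omega_h^e$ through Lemma~\ref{Omega_he2K}; the consistency bookkeeping, by comparison, is routine once~(\ref{Wh1}) and the deduced weak identities are aligned.
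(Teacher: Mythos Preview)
Your proposal is correct and follows essentially the same route as the paper's proof: the stability bound~(\ref{B_h_bound}), the splitting through $(\bl u^E,p^E)$ into a consistency part and an approximation part, and the handling of each piece via Lemma~\ref{lem:app_Th}, Lemma~\ref{lem:vn_phi}, the $RT_k/P_k$ orthogonalities, and the iterated strip estimate~(\ref{BK_e2Omeg_e}). Your explicit mean-zero adjustment for $p_I$ and your invocation of Lemma~\ref{Omega_he2K} on the test functions over $\Omega_h\backslash\Omega$ are slightly more careful than the paper (which glosses over the former and simply bounds $\|\bl v_h\|_{0,\Omega_h\backslash\Omega}\le\|\bl v_h\|_{0,\Omega_h}$ for the latter), but the overall argument is identical.
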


\begin{proof}
From (\ref{B_h_bound}), one has
\begin{equation}
\begin{aligned}\label{err_1}
\|\bl u_h-\bl u_I,p_h-p_I\|_H\lesssim \sup_{(\bl v_h, q_h)\in V_h\times Q_{0h}}\frac{B_h((\bl u_h-\bl u_I,p_h-p_I),(\bl v_h, q_h))}{\|\bl v_h, q_h\|_H}.
\end{aligned}
\end{equation}
By adding and subtracting $\bl u^E$ and $p^E$, one obtains
\begin{equation}
\begin{aligned}\label{err_2}
B_h((\bl u_h-\bl u_I,p_h-p_I),(\bl v_h, q_h))=&~a_h(\bl u_h-\bl u^E,\bl v_h)+b_{h1}(\bl v_h,p_h-p^E)+b_{h0}(\bl u_h-\bl u^E,q_h)\\
&+a_h(\bl u^E-\bl u_I,\bl v_h)+b_{h1}(\bl v_h,p^E-p_I)+b_{h0}(\bl u^E-\bl u_I,q_h)\\
:= &~ E_R+E_h.
\end{aligned}
\end{equation}
By the primal problem (\ref{primal_problem}), the discrete problem (\ref{Wh1}), along with equations (\ref{Taylor_ex}) and (\ref{BK_e2Omeg_e}), the consistency error is expressed as follows
\begin{equation}
\begin{aligned}\label{err_3}
E_R&=a_h(\bl u_h-\bl u^E,\bl v_h)+b_{h1}(\bl v_h,p_h-p^E)+b_{h0}(\bl u_h-\bl u^E,q_h)\\
&=\sum_{e\in\E_h^b}h^{-1}\lal\wt{g}_N-T^k\bl u^E\cdot\wt{\bl\n}, T^k\bl v_h\cdot\wt{\bl\n}\ral_e+(\bl u^E-\nabla p^E,\bl v_h)_{\Omega_h}+(\mathrm{div}\,u^E-f^E,q_h)_{\Omega_h}\\
&\lesssim \delta^{k+1}h^{-1/2}M_{k+1}(\bl u^E)\|\bl v_h\|_{0,h}+\delta^{l}\bigg(\|D^l(\bl u^E+\nabla p^E)\|_{0,\Omega_h\backslash\Omega}+\|D^l(\mathrm{div}\,\bl u^E-f^E)\|_{0,\Omega_h\backslash \Omega}\bigg),
\end{aligned}
\end{equation}
where we have utilized $(\bl u^E-\nabla p^E)|_{\Omega}=0$ and $(\mathrm{div}\,\bl u^E-f^E)|_{\Omega}=0$.
Next, we estimate the remaining approximation terms. 
From the definition of interpolation given in (\ref{Pi_k}), it follows that
\begin{equation*}
\begin{aligned}
b_{h0}(\bl u^E-\bl u_I,q_h)&=-(\mathrm{div}\,(\bl u^E-\bl u_I),q_h)\\
&=\sum_{K\in\T_h}(\bl u^E-\bl u_I,\nabla q_h)_K-\sum_{K\in\T_h}\lal(\bl u^E-\bl u_I)\cdot\bl\n_h, q_h\ral_{\partial K}\\
&=0,
\end{aligned}
\end{equation*}
and
\begin{equation*}
\begin{aligned}
b_{h1}(\bl v_h,p^E-p_I)&=-\sum_{K\in\T_h}(\mathrm{div}\,\bl v_h,p^E-p_I)_K+\sum_{e\in\E_h^b}\lal\bl v_h\cdot\bl\n_h, p^E-p_I\ral_e\\
&=\sum_{e\in\E_h^b}\lal\bl v_h\cdot\bl\n_h, p^E-p_I\ral_e.
\end{aligned}
\end{equation*}
Thus, by rearranging the term $E_h$, we deduce
\begin{equation*}
\begin{aligned}\label{err_5}
E_h&= a_h(\bl u^E-\bl u_I,\bl v_h)+b_{h1}(\bl v_h,p^E-p_I)+b_{h0}(\bl u^E-\bl u_I,q_h)\\
&=a_h(\bl u^E-\bl u_I,\bl v_h)+\sum_{e\in\E_h^b}\lal\bl v_h\cdot\bl\n_h, p^E-p_I\ral_e.
\end{aligned}
\end{equation*}
By the Schwarz inequality and Lemmas \ref{lem:app_Th}, \ref{lem:interpolation_app}, we derive
\begin{equation*}
\begin{aligned}
E_h\lesssim \left(h^{r+1}|\bl u^E|_{r+1,\Omega_h}+h^{k+1/2}\|\bl u^E\|_{k+1,\infty,\Omega_h}\right)
\|\bl v_h\|_{0,h}+\sum_{e\in\E_h^b}\lal\bl v_h\cdot\bl\n_h, p^E-p_I\ral_e.
\end{aligned}
\end{equation*}
From Lemma \ref{lem:vn_phi}, it holds
\begin{equation*}
\begin{aligned}
\sum_{e\in\E_h^b}\lal\bl v_h\cdot\bl\n_h, p^E-p_I\ral_e\lesssim \left(\sum_{e\in\E_h^b} h\|p^E-p_I\|_{0,e}^2\right)^{1/2}\|\bl v_h\|_{0,h}.
\end{aligned}
\end{equation*}
Finally, by the trace inequality in Lemma \ref{lem:TRACE} and the approximation property of $L^2$-projection in Lemma \ref{lem:orthogonal projection_app}, one gets
\begin{equation*}
\begin{aligned}\label{I_p_bounded}
\sum_{e\in\E_h^b}\lal\bl v_h\cdot\bl\n_h,p^E-p_I\ral_e\lesssim h^{t+1}|p^E|_{t+1,\Omega_h}\|\bl v_h\|_{0,h}.
\end{aligned}
\end{equation*}
Thus
\begin{equation}
\begin{aligned}\label{err_4}
E_h\lesssim \left(h^{k+1/2}\|\bl u^E\|_{k+1,\infty,\Omega_h}+h^{r+1}|\bl u^E|_{r+1,\Omega_h}+h^{t+1}|p^E|_{t+1,\Omega_h}\right)\|\bl v_h\|_{0,h}.
\end{aligned}
\end{equation}

Combining (\ref{err_1})-(\ref{err_4}) and Lemma \ref{lem:extension}, the theorem is proved.
\end{proof}

\begin{thm}\label{thm:L2err}
Under the same assumption of Theorem \ref{thm:energy_err}, for $s:=\min\{r,t,k\}$, one gets
\begin{equation*}
\begin{aligned}
\|\bl u^E-\bl u_h\|_{0,\Omega_h}&\lesssim h^{s+1}(|\bl u|_{r+1,\Omega}+|p|_{t+1,\Omega})+h^{k+1/2}\|\bl u^E\|_{k+1,\infty,\Omega_h}
+\delta^{k+1}h^{-\frac{1}{2}}M_{k+1}(\bl u^E),\\
\|\nabla (p-p_h)\|_{0,\T_h}&\lesssim h^{s}(|\bl u|_{r+1,\Omega}+|p|_{t+1,\Omega})+h^{k+1/2}\|\bl u^E\|_{k+1,\infty,\Omega_h}
+\delta^{k+1}h^{-\frac{1}{2}}M_{k+1}(\bl u^E).
\end{aligned}
\end{equation*}
\end{thm}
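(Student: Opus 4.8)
The plan is to obtain both bounds directly from the energy estimate of Theorem~\ref{thm:energy_err} by a triangle inequality, splitting each error into an \emph{approximation part} (exact field minus its interpolant/projection) and a \emph{discrete part} (interpolant/projection minus discrete solution). The discrete parts are already controlled by Theorem~\ref{thm:energy_err} once one records the two elementary norm dominations
\[
\|\bl w\|_{0,\Omega_h}\le\|\bl w\|_{0,h}\ \ (\forall\,\bl w\in V_h),\qquad
\Big(\sum_{K\in\T_h}\|\nabla q\|_{0,K}^2\Big)^{1/2}\le\|q\|_{1,h}\ \ (\forall\,q\in Q_{0h}),
\]
which hold because $\|\cdot\|_{0,h}$ and $\|\cdot\|_{1,h}$ are formed by adding nonnegative boundary/jump contributions to exactly these quantities. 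A notable point is that, unlike for classical $H(\mathrm{div})$-mixed methods, \emph{no} Aubin--Nitsche duality argument is needed for the velocity: here $a_h$ is essentially the $L^2$ inner product, so the energy norm already controls $\|\cdot\|_{0,\Omega_h}$.

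For the velocity I would write $\|\bl u^E-\bl u_h\|_{0,\Omega_h}\le\|\bl u^E-\bl u_I\|_{0,\Omega_h}+\|\bl u_I-\bl u_h\|_{0,\Omega_h}$. The second term is dominated by $\|\bl u_I-\bl u_h\|_{0,h}$ and hence by the right-hand side of Theorem~\ref{thm:energy_err}. For the first term I apply the interpolation estimate of Lemma~\ref{lem:interpolation_app} (in its $\bl H^{r+1}$ form, $m=0$) followed by the extension bound of Lemma~\ref{lem:extension}, giving $\|\bl u^E-\bl u_I\|_{0,\Omega_h}\lesssim h^{r+1}|\bl u^E|_{r+1,\Omega_h}\lesssim h^{s+1}|\bl u|_{r+1,\Omega}$, where I use $s\le r$ and $h\le 1$. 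Collecting the two contributions reproduces the claimed velocity bound.

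For the pressure the same splitting gives $\|\nabla(p^E-p_h)\|_{0,\T_h}\le\|\nabla(p^E-p_I)\|_{0,\T_h}+\|\nabla(p_I-p_h)\|_{0,\T_h}$, the last term being dominated by $\|p_I-p_h\|_{1,h}$ and thus by Theorem~\ref{thm:energy_err}. The new feature is that the projection term now carries a gradient: applying Lemma~\ref{lem:orthogonal projection_app} with $m=1$ costs one power of $h$, so $\|\nabla(p^E-p_I)\|_{0,\T_h}\lesssim h^{t}|p^E|_{t+1,\Omega_h}\lesssim h^{s}|p|_{t+1,\Omega}$. This is precisely why the pressure rate degrades to $h^{s}$; the inherited $h^{s+1}$ term is then absorbed into $h^{s}$.

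The one genuinely technical step is discarding the domain-perturbation term $\delta^{l}\big(\|D^{l}(\bl u^E+\nabla p^E)\|_{0,\Omega_h\backslash\Omega}+\|D^{l}(\mathrm{div}\,\bl u^E-f^E)\|_{0,\Omega_h\backslash\Omega}\big)$ that appears in Theorem~\ref{thm:energy_err} but not in the present statement. I would exploit that $\bl u^E+\nabla p^E$ and $\mathrm{div}\,\bl u^E-f^E$ vanish identically on $\Omega$, hence together with all their derivatives they vanish on $\Gamma$; decomposing $\Omega_h\backslash\Omega=\cup_{e\in\E_h^b}\Omega_h^e$ and invoking the Bramble--King inequality (\ref{BK_e2Omeg_e}) of Lemma~\ref{lem:B_K1994} on each strip trades a factor for a power of $\delta_h$, after which $\delta\lesssim h^{2}$ from (\ref{deta}) is used. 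Under the regularity hypothesis of the theorem (which permits $l$ with $\lceil l/2\rceil$-smoothness), this renders $\delta^{l}(\cdots)$ of strictly higher order than $h^{s}$ and $h^{k+1/2}$, so it drops out. I expect this absorption---correctly tracking the power of $\delta$ gained on the thin strip and matching it against the leading rates---to be the main obstacle, while the remaining steps are routine triangle inequalities and direct appeals to the approximation lemmas.
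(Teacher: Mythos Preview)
Your approach is essentially identical to the paper's: triangle inequality to split each error into an approximation part (bounded via Lemmas~\ref{lem:interpolation_app} and~\ref{lem:orthogonal projection_app}) and a discrete part (dominated by the mesh-dependent norms and hence by Theorem~\ref{thm:energy_err}), then Lemma~\ref{lem:extension} to pass from $\Omega_h$ to $\Omega$. On the $\delta^{l}$ term you flag as the ``main obstacle'': the paper's proof is actually silent on it and simply drops it, the justification being relegated to the ensuing Remark and Tab.~\ref{tab_err}, where $\delta\lesssim h^{2}$ and a suitable choice of $l$ render $\delta^{l}\lesssim h^{2l}$ higher order than the displayed terms---so your Bramble--King iteration, while correct in spirit, is more than what is needed (or what the paper does).
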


\begin{proof}
Using the triangle inequality and the approximation error of interpolation in Lemma \ref{lem:interpolation_app} yields
\begin{equation*}
\begin{aligned}
\|\bl u^E-\bl u_h\|_{0,\Omega_h}&\leq \|\bl u^E-\bl u_I\|_{0,\Omega_h}+\|\bl u_I-\bl u_h\|_{0,\Omega_h}\\
&\lesssim h^{r+1}|\bl u^E|_{r+1,\Omega_h}+\|\bl u_I-\bl u_h\|_{0,h},\\
\end{aligned}
\end{equation*}
and the approximation error of $L^2$ projection in Lemma \ref{lem:orthogonal projection_app} implies
\begin{equation*}
\begin{aligned}
\|\nabla(p^E-p_h)\|_{0,\T_h}&\leq \|\nabla(p^E-p_I)\|_{0,\T_h}+\|\nabla(p_I-p_h)\|_{0,\T_h}\\
&\lesssim h^{t}|p^E|_{t+1,\Omega_h}+\|p_I-p_h\|_{1,h},
\end{aligned}
\end{equation*}
then, combining the Theorem \ref{thm:energy_err} and Lemma \ref{lem:extension}, we obtain this proof.
\end{proof}

\begin{re}
Under the assumption $\delta\lesssim h^2$, Theorem \ref{thm:L2err} demonstrates a suboptimal convergence rate for the velocity field in the $L^2$-norm
and an optimal order of convergence for pressure field in the $H^1$-norm.
From Tab. \ref{tab_err}, it is observed that the velocity error estimate is $O(h^{k+1/2})$ for $s=k$.
\end{re}

\begin{table}[!htbp]
\caption{\small\textit{The error order of terms in $L^2$-norm under the assumption $\delta\lesssim h^2$.}}
\label{tab_err}
\begin{center}
{\small
\begin{tabular}{c|c|c|c|c|c|c}
\cline{1-7}
$s$  &$h^{s+1}$   &$k$    &$h^{k+1/2}$              &$\delta^{k+1}h^{-\frac{1}{2}}$   &$l$  &$\delta^{l}$\\
1    &$h^2$       &1      &$h^{1.5}$                   &$h^{3.5}$                      &$1$     &$h^2$\\
2    &$h^3$       &2      &$h^{2.5}$                    &$h^{5.5}$                      &$2$     &$h^4$\\
3    &$h^4$       &3      &$h^{3.5}$                   &$h^{7.5}$                    &$3$     &$h^6$\\
\cline{1-7}
\end{tabular}}
\end{center}
\end{table}

\section{Discrete problem without correction}\label{sec6:without_correction}

{The goal of this section is} to verify the necessity of boundary-value correction on curved domains. We will briefly explain the main idea without applying boundary value correction on $\Gamma_h$.
For simplicity, we assume that $g_N=0$ of problem (\ref{primal_problem}). 
Without boundary value correction means that $\bl u^E\cdot\bl \n_h=0$ is coercively imposed on $\Gamma_h$.
Define
$$H^{r}(\mathrm{div},S):=\{\bl v\in \bl H^r(S):\mathrm{div}\,\bl v\in H^r(S)\},$$

The weak formulation can be written as: {\it Find $\bl u_h\in V_{0h}$ and $p_h\in Q_{0h}$ such that}
\begin{equation}
\left\{
\begin{aligned}\label{Without_Ph}
(\bl u_h,\bl v_h)_{\Omega_h}-(\mathrm{div}\,\bl v_h,p_h)_{\Omega_h}&=0, \quad &\forall&\bl v_h\in V_{0h},\\
-(\mathrm{div}\,\bl u_h,q_h)_{\Omega_h}&=-(f^E,q_h)_{\Omega_h},  \quad &\forall &q_h\in Q_{0h},
\end{aligned}
\right.
\end{equation}
where $V_{0h}:=V_h\cap H_0(\mathrm{div},\Omega_h)$.

Due to $f^E=(\mathrm{div}\, \bl u)^E$, and $\mathrm{div}\, \bl u^E\neq (\mathrm{div}\, \bl u)^E$ on $\Omega_h\backslash \Omega$, the error equation can be easily deduce:
\begin{align}\label{err_equation_N}
B_h^*((\bl u^E-\bl u_h,p^E-p_h),(\bl v_h, q_h))&=(f^E-\mathrm{div}\,\bl u^E,q_h)_{\Omega_h},
\end{align}
where
$$B_h^*((\bl w,\phi),(\bl v_h, q_h)):=(\bl w,\bl v_h)_{\Omega_h}-(\mathrm{div}\,\bl v_h,\phi)_{\Omega_h}-(\mathrm{div}\,\bl w,q_h)_{\Omega_h}.$$

Define $\widetilde I_h:H_0(\mathrm{div},\Omega_h)\cap \prod_{K\in\T_h}H^s(K)\rightarrow V_{0h}, s>1/2$ satisfies
\begin{equation}
\left\{
\begin{aligned}\label{pi_without}
\widetilde I_K \bl v\cdot\bl\n_h|_e&=0,\quad &\forall\,&e\in\E_h^b,\\
\lal \widetilde I_K \bl v\cdot\bl\n_h, q_k \ral_e& =\lal \bl v\cdot\bl\n_h, q_k \ral_e,\quad &\forall\, &q_k(e)\in P_k(e),\forall\,e\in\E_h^o,\\
( \widetilde I_K \bl v,\bl q_{k-1})_K&=(\bl v,\bl q_{k-1})_K,\quad &\forall\,&\bl q_{k-1}\in \bl P_{k-1}(K),\,\forall\,K\in\T_h,
\end{aligned}
\right.
\end{equation}
where $\widetilde I_K=\widetilde I_h|_K$. 
Furthermore, we derive
\begin{equation}
\begin{aligned}\label{commutative_diagram}
\mathrm{div}\,\widetilde I_h \bl v=\Pi_k^{0}\mathrm{div}\,\bl v .
\end{aligned}
\end{equation}

Through the definitions $\widetilde I_K$ and $ I_K$ of (\ref{Pi_k}), we obtain the following
\begin{equation}
\begin{aligned}\label{Pi_equal}
I_K\bl v-\widetilde I_K \bl v=&~0,\quad &\forall &K\in\T_h^o,\\
I_K\bl v-\widetilde I_K \bl v=&\sum_{i=0}^{k}\int_e \bl v\cdot \bl \n_h\,q_i\dif s\,\psi_i^e,\quad &\forall& q_i\in P_i(e),e\in\E_h^b,K\in\T_h^b,
\end{aligned}
\end{equation}
where $\{\psi_i^e\}_{i=0}^k$ denotes the basis functions of $RT_k$ on $e\in\E_h^b$.

By the error equation (\ref{err_equation_N}), together with the second Strang's lemma in \cite{FEM_Brenner}, one obtains the following estimate:
\begin{lem}\label{lem:without_errapp}
Let $(\bl u,p)$ be the solution of (\ref{primal_problem}) with $g_N=0$ and $(\bl u_h,p_h)\in V_{0h}\times Q_{0h}$
be the solution of (\ref{Without_Ph}). Then
\begin{equation*}
\begin{aligned}
\|\bl u^E-\bl u_h\|_{H(\mathrm{div},\Omega_h)}+\|p^E-p_h\|_{0,\Omega_h}&\lesssim
\inf_{\bl v_h\in V_{0h}}\|\bl u^E-\bl v_h\|_{H(\mathrm{div},\Omega_h)}+\inf_{q_h\in Q_{0h}}\|p^E-q_h\|_{0,\Omega_h}\\
&\qquad + \|f^E-\mathrm{div}\,\bl u^E\|_{0,\Omega_h}.
\end{aligned}
\end{equation*}
\end{lem}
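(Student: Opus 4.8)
The plan is to invoke the second Strang lemma for the saddle-point form $B_h^*$, so the argument splits into three pieces: a discrete stability (inf-sup) statement for $B_h^*$ on $V_{0h}\times Q_{0h}$, a use of the error equation (\ref{err_equation_N}), and a bound on the single consistency term $(f^E-\mathrm{div}\,\bl u^E,\cdot)_{\Omega_h}$. Throughout I would equip $V_{0h}\times Q_{0h}$ with the product norm $\|(\bl v_h,q_h)\|_*:=(\|\bl v_h\|_{H(\mathrm{div},\Omega_h)}^2+\|q_h\|_{0,\Omega_h}^2)^{1/2}$.

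First I would establish that $B_h^*$ satisfies the Brezzi conditions with respect to $\|\cdot\|_*$. Since $B_h^*$ is exactly the classical Raviart--Thomas mixed bilinear form on the polygonal domain $\Omega_h$ with the essential constraint $\bl v_h\cdot\bl\n_h=0$ on $\Gamma_h$ built into $V_{0h}$, continuity is immediate from Cauchy--Schwarz. Coercivity on the kernel is clear because $(\bl w,\bl w)_{\Omega_h}=\|\bl w\|_{0,\Omega_h}^2=\|\bl w\|_{H(\mathrm{div},\Omega_h)}^2$ whenever $\mathrm{div}\,\bl w=0$, and the inf-sup condition for $b^*(\bl v,q)=-(\mathrm{div}\,\bl v,q)_{\Omega_h}$ follows from the surjectivity of $\mathrm{div}:V_{0h}\to Q_{0h}$ guaranteed by the commuting diagram property (\ref{commutative_diagram}); note that for $\bl v_h\in V_{0h}$ one has $\int_{\Omega_h}\mathrm{div}\,\bl v_h\dif x=0$, so the range lands correctly in $Q_{0h}$. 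Together these yield the combined inf-sup estimate $\|(\bl w_h,\phi_h)\|_*\lesssim\sup_{(\bl v_h,q_h)}B_h^*((\bl w_h,\phi_h),(\bl v_h,q_h))/\|(\bl v_h,q_h)\|_*$ for all $(\bl w_h,\phi_h)\in V_{0h}\times Q_{0h}$.

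With stability in hand I would run the Strang argument. Fix arbitrary $\bl v_h\in V_{0h}$, $q_h\in Q_{0h}$ and apply the combined inf-sup to $(\bl u_h-\bl v_h,p_h-q_h)\in V_{0h}\times Q_{0h}$. Splitting $B_h^*((\bl u_h-\bl v_h,p_h-q_h),(\bl w_h,\tau_h))=B_h^*((\bl u_h-\bl u^E,p_h-p^E),(\bl w_h,\tau_h))+B_h^*((\bl u^E-\bl v_h,p^E-q_h),(\bl w_h,\tau_h))$ and substituting the error equation (\ref{err_equation_N}) turns the first term into $-(f^E-\mathrm{div}\,\bl u^E,\tau_h)_{\Omega_h}$, bounded by $\|f^E-\mathrm{div}\,\bl u^E\|_{0,\Omega_h}\|(\bl w_h,\tau_h)\|_*$, while the second is bounded by continuity of $B_h^*$ via $\|(\bl u^E-\bl v_h,p^E-q_h)\|_*\|(\bl w_h,\tau_h)\|_*$. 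Dividing by $\|(\bl w_h,\tau_h)\|_*$ and taking the supremum gives $\|(\bl u_h-\bl v_h,p_h-q_h)\|_*\lesssim\|(\bl u^E-\bl v_h,p^E-q_h)\|_*+\|f^E-\mathrm{div}\,\bl u^E\|_{0,\Omega_h}$. A triangle inequality then bounds $\|(\bl u^E-\bl u_h,p^E-p_h)\|_*$, and since $\bl v_h$ and $q_h$ live in separate slots, taking the infimum decouples into the two stated best-approximation terms.

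The main obstacle is the discrete inf-sup/stability step, since everything afterward is routine bookkeeping. The delicate point to verify is the surjectivity of the divergence from $V_{0h}$ (with the homogeneous normal-trace constraint on $\Gamma_h$) onto the mean-value-free space $Q_{0h}$, which is precisely what legitimizes the Brezzi framework with a mean-value-free pressure space; this rests on (\ref{commutative_diagram}) together with the shape-regularity and quasi-uniformity of $\T_h$. Once the inf-sup constant is confirmed to be $h$-independent, the second Strang lemma of \cite{FEM_Brenner} applies verbatim and closes the argument.
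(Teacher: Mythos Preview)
Your proposal is correct and follows exactly the approach the paper indicates: the paper does not spell out a proof but simply states that the estimate follows from the error equation (\ref{err_equation_N}) together with the second Strang lemma, and your write-up is a faithful expansion of that argument, supplying the Brezzi stability of $B_h^*$ on $V_{0h}\times Q_{0h}$ (continuity, kernel coercivity, and the classical $RT_k$ inf-sup via the commuting diagram) and then running the standard split-and-bound. The only point to be slightly more explicit about is that the $h$-independent inf-sup constant comes from the usual Fortin/auxiliary-problem argument on the polygonal domain $\Omega_h$, not from surjectivity alone, but you already flag this as the delicate step.
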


\begin{thm}\label{thm:without_err}
Let $(\bl u,p)\in H^{\max\{2,r,\lceil l/2\rceil\}}(\mathrm{div},\Omega)\times H^{\max\{t+1,\lceil l/2\rceil\}}(\Omega)$ be the solution of problem (\ref{primal_problem})
and $\bl u^E$, $p^E$ and $f^E$ are the extension functions of $\bl u$, $p$ and $f$. 
Let $(\bl u_h,p_h)\in V_{0h}\times Q_{0h}$ be the discrete solution of (\ref{Without_Ph}), it holds
\begin{equation*}
\begin{aligned}
\|\bl u^E-\bl u_h\|_{H(\mathrm{div},\Omega_h)}+\|p^E-p_h\|_{0,\Omega_h}&\lesssim h^{3/2}|\bl u|_{2,\Omega}+h^{r}\|\mathrm{div}\,\bl u^E\|_{r,\Omega_h}+h^{t+1}|p|_{t+1,\Omega}\\
&\qquad + \delta^l\|D^l(f^E-\mathrm{div}\,\bl u^E)\|_{0,\Omega_h\backslash \Omega}.
\end{aligned}
\end{equation*}
\end{thm}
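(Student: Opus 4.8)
The plan is to apply Lemma~\ref{lem:without_errapp}, which has already reduced the task to bounding three quantities: the best approximation error of $\bl u^E$ in the $H(\mathrm{div},\Omega_h)$-norm over $V_{0h}$, the best approximation error of $p^E$ in $L^2(\Omega_h)$ over $Q_{0h}$, and the geometric consistency term $\|f^E-\mathrm{div}\,\bl u^E\|_{0,\Omega_h}$. I would treat these three in turn. The pressure term is the easiest: since $p^E\in H^{t+1}(\Omega_h)$, choosing $q_h=\Pi_k^0 p^E$ and invoking the $L^2$-projection approximation bound in Lemma~\ref{lem:orthogonal projection_app} immediately yields $h^{t+1}|p^E|_{t+1,\Omega_h}$, which Lemma~\ref{lem:extension} converts to $h^{t+1}|p|_{t+1,\Omega}$. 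The consistency term is also direct: because $(\mathrm{div}\,\bl u^E-f^E)|_\Omega=0$, the integrand is supported on $\Omega_h\backslash\Omega$, so a Taylor/Bramble--King argument of the form used for $E_R$ in Theorem~\ref{thm:energy_err} (specifically the estimate~(\ref{BK_e2Omeg_e}) applied to the $l$-th order remainder) gives $\delta^l\|D^l(f^E-\mathrm{div}\,\bl u^E)\|_{0,\Omega_h\backslash\Omega}$.

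The genuinely delicate term is $\inf_{\bl v_h\in V_{0h}}\|\bl u^E-\bl v_h\|_{H(\mathrm{div},\Omega_h)}$, and this is where I expect the $h^{3/2}|\bl u|_{2,\Omega}$ rate to originate. The natural candidate is $\widetilde I_h\bl u^E$, but the standard interpolant $I_h\bl u^E$ need not lie in $V_{0h}$ because its normal trace on boundary edges is generally nonzero. The key is to quantify the defect $I_K\bl u^E-\widetilde I_K\bl u^E$ using~(\ref{Pi_equal}): on interior elements it vanishes, and on each boundary element it equals a sum of Raviart--Thomas basis functions weighted by $\int_e\bl u^E\cdot\bl\n_h\,q_i\dif s$. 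Since the \emph{true} solution satisfies $\bl u\cdot\bl\n=0$ on $\Gamma$ (as $g_N=0$), the quantity $\bl u^E\cdot\bl\n_h$ on $\Gamma_h$ is not zero but is small because $\Gamma_h$ approximates $\Gamma$; I would bound it by pulling back to $\Gamma$ via $M_h$ and exploiting $\delta\lesssim h^2$ together with $\|\wt{\bl\n}-\bl\n_h\|_{L^\infty}\lesssim h$ from~(\ref{deta}). Estimating $\|\bl u^E\cdot\bl\n_h\|_{0,e}$ on each boundary edge through a trace inequality (Lemma~\ref{lem:TRACE}) applied to the difference $\bl u^E\cdot\bl\n_h - \wt{\bl u}\cdot\wt{\bl\n}$ and using $\bl u\cdot\bl\n=0$ should produce the half-order-reduced rate, since summing $h^{-1}$-scaled boundary contributions over $\E_h^b$ against the $O(h^2)$ geometric deviation and the trace scaling leaves $h^{3/2}|\bl u|_{2,\Omega}$.

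For the $H(\mathrm{div})$-norm I must control both $\|\bl u^E-\widetilde I_h\bl u^E\|_{0,\Omega_h}$ and $\|\mathrm{div}(\bl u^E-\widetilde I_h\bl u^E)\|_{0,\Omega_h}$. The $L^2$-part splits via the triangle inequality into $\|\bl u^E-I_h\bl u^E\|_{0,\Omega_h}$, handled by Lemma~\ref{lem:interpolation_app}, plus the boundary-correction defect above. For the divergence part I would use the commuting relation~(\ref{commutative_diagram}), namely $\mathrm{div}\,\widetilde I_h\bl u^E=\Pi_k^0\,\mathrm{div}\,\bl u^E$, so that $\mathrm{div}(\bl u^E-\widetilde I_h\bl u^E)=\mathrm{div}\,\bl u^E-\Pi_k^0\,\mathrm{div}\,\bl u^E$, which Lemma~\ref{lem:orthogonal projection_app} bounds by $h^r\|\mathrm{div}\,\bl u^E\|_{r,\Omega_h}$. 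Collecting the three contributions and applying Lemma~\ref{lem:extension} to return from $\Omega_h$-norms to $\Omega$-norms of $\bl u$ and $p$ yields the stated estimate. The main obstacle is the careful half-order-loss bookkeeping in the boundary defect term of the second paragraph: one must track the interplay of the $h^{-1/2}$ trace scaling, the $O(h^2)$ distance $\delta$, and the geometric normal discrepancy so that the exponents combine to exactly $h^{3/2}$ rather than a higher or lower power, and verify that $\bl u\in H^2$ regularity is both necessary and sufficient for this estimate.
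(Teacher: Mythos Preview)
Your proposal is correct and follows essentially the same route as the paper: apply Lemma~\ref{lem:without_errapp}, bound the pressure and consistency terms as you describe, take $\bl v_h=\widetilde I_h\bl u^E$ and use the commuting relation~(\ref{commutative_diagram}) for the divergence, and control $\|I_h\bl u^E-\widetilde I_h\bl u^E\|_{0,\Omega_h}$ via~(\ref{Pi_equal}) and the smallness of $\bl u^E\cdot\bl\n_h$ on $\Gamma_h$ obtained from the pullback $\wt{\bl u}\cdot\wt{\bl\n}=0$ together with~(\ref{deta}). The only refinement is that the paper uses the strip estimate of Lemma~\ref{Omega_he} (and Lemma~\ref{Omega_he2K}) rather than the standard trace inequality~\ref{lem:TRACE} to extract the $\delta$ factor from $\|\bl u^E-\wt{\bl u^E}\|_{0,e}$; with that tool the arithmetic $h\cdot h^2\|\bl u\|_{2,\Omega}^2$ for $\|I_h\bl u^E-\widetilde I_h\bl u^E\|_{0,\Omega_h}^2$ falls out cleanly.
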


\begin{proof}
Using Inequality (\ref{BK_e2Omeg_e}) to the error equation~\eqref{err_equation_N}, it holds
\begin{equation*}
\begin{aligned}
\|f^E-\mathrm{div}\,\bl u^E\|_{0,\Omega_h}\lesssim \delta^l\|D^l(f^E-\mathrm{div}\,\bl u^E)\|_{0,\Omega_h\backslash \Omega}.
\end{aligned}
\end{equation*}
From (\ref{commutative_diagram}) and the property of $L^2$-projection, we obtain
\begin{equation*}
\begin{aligned}
\|\mathrm{div}(\bl u^E-\widetilde I_h \bl u^E)\|_{0,\Omega_h}&=\|\mathrm{div}\,\bl u^E-\Pi_k^{0}\mathrm{div}\,\bl u^E\|_{0,\Omega_h} \\
&\lesssim h^r\|\mathrm{div}\,\bl u^E\|_{r,\Omega_h}.
\end{aligned}
\end{equation*}
By the triangle inequality, it yields
$$\|\bl u^E-\widetilde I_h \bl u^E\|_{0,\Omega_h}\leq \|\bl u^E- I_h \bl u^E\|_{0,\Omega_h}+\|I_h\bl u^E-\widetilde I_h \bl u^E\|_{0,\Omega_h}.$$
By (\ref{Pi_equal}), we obtain
\begin{equation*}
\begin{aligned}
\|I_h\bl u^E-\widetilde I_h \bl u^E\|_{0,\Omega_h}^2&=\sum_{K\in\T_h^b}\|I_K\bl u^E-\widetilde I_K \bl u^E\|_{0,K}^2\\
&=\sum_{K\in\T_h^b}\int_K \left|\sum_{i=0}^k\int_e \bl u^E\cdot \bl \n_h\,q_i\dif s\,\psi_i^e\right|^2 \dif x\\
&\lesssim \sum_{K\in\T_h^b} h\|\bl u^E\cdot \bl \n_h\|_{0,e}^2\sum_{i=0}^k\|\psi_i^e\|_{0,K}^2\\
&\lesssim \sum_{K\in\T_h^b} h\|\bl u^E\cdot \bl \n_h\|_{0,e}^2.
\end{aligned}
\end{equation*}
Notice that $(\bl u^E\cdot\bl\n)\circ M_h=0$, it follows from (\ref{deta}),(\ref{Omega_he})
and Lemmas \ref{Omega_he2K}, \ref{lem:extension} that
\begin{equation*}
\begin{aligned}
\sum_{e\in\E_h^b}\|\bl u^E\cdot \bl \n_h\|_{0,e}^2\lesssim &\sum_{e\in\E_h^b}(\|\bl u^E\cdot (\bl \n_h-\wt{\bl\n})\|_{0,e}^2
+\|(\bl u^E-\bl u^E\circ M_h)\cdot\wt{\bl\n}\|_{0,e}^2)\\
\lesssim&~h^2\sum_{e\in\E_h^b}\|\bl u^E\|_{0,e}^2+\delta\|\nabla \bl u^E\|_{0,(\Omega\setminus\Omega_h)\cup(\Omega_h\setminus\Omega)}^2\\
\lesssim&~h^2\sum_{e\in\E_h^b}\|\bl u^E\|_{0,\wt{e}}^2+\delta\|\nabla \bl u^E\|_{0,(\Omega\setminus\Omega_h)\cup(\Omega_h\setminus\Omega)}^2\\
\lesssim&~ h^2\|\bl u^E\|_{0,\Gamma}^2+h\delta\|\nabla \bl u^E\|_{0,\T_h^b}^2\\
\lesssim&~ h^2\|\bl u^E\|_{1,\Omega}^2+h^3\|\nabla \bl u^E\|_{0,\T_h^b}^2\\
\lesssim&~ h^2\|\bl u^E\|_{2,\Omega}^2,
\end{aligned}
\end{equation*}
where we have used the Sobolev trace theorem on $\Omega$.
Finally, by the Lemma \ref{lem:without_errapp} and the error estimate of $\|p^E-p_I\|_{0,\Omega_h}$ in Lemma \ref{lem:interpolation_app}, we complete this lemma.
\end{proof}

{From the above analysis, it is clear that the loss of accuracy without boundary value correction arises from using straight triangular elements to approximate curved elements on the boundary. 
This results in a loss of accuracy in the interpolation error $\|\bl u^E-\wt{I_h}\bl u^E\|_{0,\Omega_h}$.
Therefore, we consider a boundary value correction method to compensate for the geometric discrepancy between the curved boundary $\Gamma$ and the approximated boundary $\Gamma_h$.}

\section{Numerical experiment}\label{sec7:numerical}

In this section, we test three examples to validate error results of the original mixed element method (\ref{Without_Ph}), without any boundary correction and the correction method (\ref{Wh1}).
Consider Darcy problem on a circular domain $\{(x,y):x^2+y^2\leq 1\}$ {and a ring domain $\{(x,y):1/4\leq x^2+y^2\leq 1\}$} with triangular meshes. 
Before giving the numerical results, we define errors
$$\quad e_u=\|\bl u-\bl u_h\|_{0,\Omega_h},\qquad e_p=\|\nabla (p-p_h)\|_{0,\T_h}.$$

According to Theorems \ref{thm:without_err} and \ref{thm:L2err}, we expect to have a loss of accuracy for original mixed element method, 
a suboptimal $O(h^{k+1/2})$ convergence for correction method. 

\vspace{0.2cm}
\noindent{Example 1. Considering the problem (\ref{primal_problem}) with solution}
\vspace{0.2cm}
\begin{equation*}
\begin{aligned}
\bl u(x,y)=\dbinom{x^3(3x^2+2y^2-3)}{x^4y},\qquad p(x,y)=-\frac{1}{2}x^6-\frac{1}{2}x^4y^2+\frac{3}{4}x^4-\frac{3}{64},
\end{aligned}
\end{equation*}
which satisfies a homogeneous Neumann boundary condition $\bl u\cdot\bl\n=0$ on $\Gamma$ and $\int_{\Omega}p\dif x=0$.

\vspace{0.2cm}
\noindent Example 2. The exact solution is
\vspace{0.2cm}
\begin{equation*}
\begin{aligned}
\bl u(x,y)=\dbinom{2\pi \cos(2\pi x)\sin(2\pi y)}{2\pi\sin(2\pi x)\cos(2\pi y)},\qquad p(x,y)=-\sin(2\pi x)\sin(2\pi y),
\end{aligned}
\end{equation*}
which satisfies a non-homogeneous Neumann boundary condition $\bl u\cdot\bl\n\neq 0$ on $\Gamma$ and $\int_{\Omega}p\dif x=0$.

\vspace{0.2cm}
\noindent Example 3. The exact solution is
\vspace{0.2cm}
\begin{equation*}
\begin{aligned}
\bl u(x,y)=\dbinom{\pi \cos(\pi x)(e^y-e^{-y})}{\pi\sin(\pi x)(e^y+e^{-y})},\qquad p(x,y)=-(e^y-e^{-y})\sin(\pi x),
\end{aligned}
\end{equation*}
which satisfies a non-homogeneous Neumann boundary condition $\bl u\cdot\bl\n\neq 0$ on $\Gamma$ and $\int_{\Omega}p\dif x=0$.

As a comparison, we first solve the problem by standard Raviart-Thomas element method on two different domains, without any boundary value correction. 
Tabs. \ref{tab_eg1_Nu}-\ref{tab_eg3_Nu} show an $O(h^{1/2})$ convergence in $L^2$-norm for  velocity field when $k=1,2,3$, which is a loss of accuracy, as expected.

\begin{table}[!h]
\caption{\small\textit{Errors of Example 1 for velocity without boundary value correction on circle domain.}}
\label{tab_eg1_Nu}
\begin{center}
{\small
\begin{tabular}{c | c| c| c| c| c|c}
\hline
\multirow{2}{*}{$h$}&\multicolumn{2}{c|}{$k=1$}&\multicolumn{2}{c|}{$k=2$} &\multicolumn{2}{c}{$k=3$} \\
\cline{2-7}
          & $e_{u}$  & order  & $e_{u}$ & order & $e_{u}$  & order\\
\hline
1/8   &   7.26e-03   &   --    &   6.19e-03   &   --    &   6.14e-03   &   --   \\
\hline
1/16   &   2.13e-03   &   1.77   &   1.79e-03   &   1.79   &  1.76e-03   &   1.81  \\
\hline
1/32   &   6.11e-04   &   1.80   &   4.98e-04   &   1.84  & 4.83e-04   &   1.86   \\
\hline
1/64   &   1.84e-04   &   1.73   &   1.45e-04   &   1.78   & 1.38e-04   &   1.81   \\
\hline
\end{tabular}}
\end{center}
\end{table}

\begin{table}[!h]
\caption{\small\textit{Errors of Example 2 for velocity without boundary value correction  on circle domain.}}
\label{tab_eg2_Nu}
\begin{center}
{\small
\begin{tabular}{c | c| c| c| c| c|c}
\hline
\multirow{2}{*}{$h$}&\multicolumn{2}{c|}{$k=1$}&\multicolumn{2}{c|}{$k=2$} &\multicolumn{2}{c}{$k=3$} \\
\cline{2-7}
          & $e_{u}$  & order  & $e_{u}$ & order & $e_{u}$  & order\\
\hline
1/8   &   2.86e-02   &   --    &   1.46e-02   &   --     &   1.39e-02   &   --   \\
\hline
1/16   &   8.40e-03   &   1.77   &   4.88e-03   &   1.58   &  4.61e-03   &   1.59  \\
\hline
1/32   &   2.53e-03   &   1.73   &   1.61e-03   &   1.60  & 1.51e-03   &   1.61   \\
\hline
1/64   &   8.03e-04   &   1.66   &   5.45e-04   &   1.57   & 5.04e-04   &   1.58   \\
\hline
\end{tabular}}
\end{center}
\end{table}

\begin{table}[!h]
\caption{\small\textit{Errors of Example 3 for velocity without boundary value correction  on ring domain.}}
\label{tab_eg3_Nu}
\begin{center}
{\small
\begin{tabular}{c | c| c| c| c| c|c}
\hline
\multirow{2}{*}{$h$}&\multicolumn{2}{c|}{$k=1$}&\multicolumn{2}{c|}{$k=2$} &\multicolumn{2}{c}{$k=3$} \\
\cline{2-7}
          & $e_{u}$  & order  & $e_{u}$ & order & $e_{u}$  & order\\
\hline
1/8   &   6.57e-02   &   --    &   4.09e-02   &   --     &   3.64e-02   &   --   \\
\hline
1/16   &   2.17e-02   &   1.60   &   1.42e-02   &   1.52   &  1.26e-02   &   1.53  \\
\hline
1/32   &   7.49e-03   &   1.53   &   4.99e-03   &   1.51  & 4.42e-03   &   1.51   \\
\hline
1/64   &   2.59e-03   &   1.53   &   1.76e-03   &   1.51   & 1.54e-03   &   1.52  \\
\hline
\end{tabular}}
\end{center}
\end{table}

 We then present the results using the above tree examples to test the discrete scheme (\ref{Wh1}) on a circle domain and a ring domain. 
 Results of boundary value correction are shown in Tabs \ref{tab_eg1_u}-\ref{tab_eg3_p}.
 We observe an $O(h^{k+1/2})$ convergence in $L^2$-norm for velocity and an $O(h^k)$ convergence for pressure in $H^1$-norm, which agrees well with Theorem \ref{thm:L2err}.
 Tab. \ref{tab_eg3_u} shows that some examples may exhibit a superconvergence error for velocity in $L^2$-norm.

\begin{table}[!h]
\caption{\small\textit{Errors of Example 1 for velocity with boundary value correction  on circle domain.}}
\label{tab_eg1_u}
\begin{center}
{\small
\begin{tabular}{c | c| c| c|c|c|c}
\hline
\multirow{2}{*}{$h$}&\multicolumn{2}{c|}{$k=1$}&\multicolumn{2}{c|}{$k=2$} &\multicolumn{2}{c}{$k=3$} \\
\cline{2-7}
          & $e_u$  & order  & $e_u$ & order & $e_u$  & order\\
\hline
1/8   &     4.56e-03   &   --   &   2.17e-04   &   --   &   5.78e-06   &   --   \\
\hline
1/16   &   1.38e-03   &   1.72   &   3.57e-05   &   2.60   &   4.86e-07   &   3.57   \\
\hline
1/32   &    4.14e-04   &   1.74   &   5.62e-06   &   2.67  &   3.89e-08   &   3.64   \\
\hline
1/64   &   1.23e-04   &   1.75   &   8.68e-07   &   2.69    &   3.06e-09   &   3.67   \\
\hline
\end{tabular}}
\end{center}
\end{table}
\begin{table}[!h]
\caption{\small\textit{Errors of Example 1 for pressure with boundary value correction on circle domain.}}
\label{tab_eg1_p}
\begin{center}
{\small
\begin{tabular}{c | c| c| c| c| c|c}
\hline
\multirow{2}{*}{$h$}&\multicolumn{2}{c|}{$k=1$}&\multicolumn{2}{c|}{$k=2$} &\multicolumn{2}{c}{$k=3$} \\
\cline{2-7}
          & $e_p$  & order  & $e_p$  & order & $e_p$  & order\\
\hline
1/8    &   5.75e-02   &   --  &   7.14e-03   &   -- &   3.93e-04   &   -- \\
\hline
1/16   &   2.98e-02   &   0.95  &  1.86e-03   &   1.94    &   5.04e-05   &   2.96\\
\hline
1/32   &   1.49e-02   &   1.00  &  4.67e-04   &   2.00 &   6.21e-06   &   3.02 \\
\hline
1/64   &   7.38e-03   &   1.01   & 1.16e-04   &   2.00 &   7.71e-07   &   3.01  \\
\hline
\end{tabular}}
\end{center}
\end{table}
\begin{table}[!h]
\caption{\small\textit{Errors of Example 2 for velocity with boundary value correction on circle domain.}}
\label{tab_eg2_u}
\begin{center}
{\small
\begin{tabular}{c | c| c| c|c|c|c}
\hline
\multirow{2}{*}{$h$}&\multicolumn{2}{c|}{$k=1$}&\multicolumn{2}{c|}{$k=2$} &\multicolumn{2}{c}{$k=3$} \\
\cline{2-7}
          & $e_u$  & order  & $e_u$ & order & $e_u$  & order\\
\hline
1/8   &     2.52e-02   &   --   &   1.15e-03   &   --   &   3.35e-05   &   --   \\
\hline
1/16   &   7.37e-03   &   1.78   &   1.89e-04   &   2.60   &   2.81e-06   &   3.58   \\
\hline
1/32   &    2.14e-03   &   1.78   &  2.99e-05   &   2.66  &   2.22e-07   &   3.66   \\
\hline
1/64   &   6.22e-04   &   1.78   &   4.59e-06   &   2.70    &   1.72e-08   &   3.69   \\
\hline
\end{tabular}}
\end{center}
\end{table}
\begin{table}[!h]
\caption{\small\textit{Errors of Example 2 for pressure with boundary value correction on circle domain.}}
\label{tab_eg2_p}
\begin{center}
{\small
\begin{tabular}{c | c| c| c| c| c|c}
\hline
\multirow{2}{*}{$h$}&\multicolumn{2}{c|}{$k=1$}&\multicolumn{2}{c|}{$k=2$} &\multicolumn{2}{c}{$k=3$} \\
\cline{2-7}
          & $e_p$  & order  & $e_p$  & order & $e_p$  & order\\
\hline
1/8    &   4.85e-01   &   --  &   3.19e-02   &   -- &   1.57e-03   &   -- \\
\hline
1/16   &   2.43e-01   &   0.99  &  7.87e-03   &   2.02    &   1.95e-04   &   3.01\\
\hline
1/32   &   1.21e-01   &   1.01  &  1.92e-03   &   2.04 &   2.33e-05   &   3.06\\
\hline
1/64   &   5.89e-02   &   1.04    & 4.59e-04   &   2.06 &   2.69e-06   &   3.12  \\
\hline
\end{tabular}}
\end{center}
\end{table}

\begin{table}[!h]
\caption{\small\textit{Errors of Example 3 for velocity with boundary value correction on ring domain.}}
\label{tab_eg3_u}
\begin{center}
{\small
\begin{tabular}{c | c| c| c|c|c|c}
\hline
\multirow{2}{*}{$h$}&\multicolumn{2}{c|}{$k=1$}&\multicolumn{2}{c|}{$k=2$} &\multicolumn{2}{c}{$k=3$} \\
\cline{2-7}
          & $e_u$  & order  & $e_u$ & order & $e_u$  & order\\
\hline
1/8   &     4.77e-02   &   --   &   1.15e-03   &   --   &   1.50e-05   &   --   \\
\hline
1/16   &   1.55e-02   &   1.54   &   1.83e-04   &   2.65   &   8.55e-07   &   4.13   \\
\hline
1/32   &    5.25e-03   &   1.48   &  3.06e-05   &   2.58  &   5.02e-08   &   4.09   \\
\hline
1/64   &   1.81e-03   &   1.45   &   5.26e-06   &   2.54    &  2.92e-09   &  4.02   \\
\hline
\end{tabular}}
\end{center}
\end{table}
\begin{table}[!h]
\caption{\small\textit{Errors of Example 3 for pressure with boundary value correction on ring domain.}}
\label{tab_eg3_p}
\begin{center}
{\small
\begin{tabular}{c | c| c| c| c| c|c}
\hline
\multirow{2}{*}{$h$}&\multicolumn{2}{c|}{$k=1$}&\multicolumn{2}{c|}{$k=2$} &\multicolumn{2}{c}{$k=3$} \\
\cline{2-7}
          & $e_p$  & order  & $e_p$  & order & $e_p$  & order\\
\hline
1/8    &   4.33e-01   &   --  &   2.26e-02   &   -- &   7.78e-04   &   -- \\
\hline
1/16   &   2.18e-01   &   0.99  &  5.66e-03   &   2.00    &   9.81e-05   &   2.99\\
\hline
1/32   &   1.09e-01   &   1.00  &  1.42e-03   &   2.00 &   1.23e-05   &   3.00\\
\hline
1/64   &   5.44e-02   &   1.00    & 3.54e-04   &   2.00 &   1.54e-06  &   3.00  \\
\hline
\end{tabular}}
\end{center}
\end{table}

\section{Conclusion}\label{sec8:conclusion}
In this paper, we focus on the high-order Raviart-Thomas element on domains with curved boundaries.
The Neumann boundary is weakly imposed on the variational formulation.
This paper reaches an $O(h^{k+1/2})$ convergence in $L^2$-norm estimate
for the velocity field and an $O(h^k)$ convergence in $H^1$-norm estimate for the pressure.
Moreover, this paper concludes a completely theoretical analysis of a loss of approximation accuracy for high-order element.
 \paragraph*{Acknowledgment}
Yongli Hou and Yi Liu are supported by the NSFC grant 12171244.

\bibliographystyle{abbrv}

\end{document}